\documentclass[de, en, bibtex, secnum, autoprintbibfalse]{./cls/myamspaper}

\usepackage[top=40mm, bottom=40mm, left=30mm, right=30mm]{geometry}

%\thanks{}

\subjclass[2020]{Primary: 47A35; Secondary: 46L08, 46L55}

\addbibresource{bib/bibliography.bib}

\DeclareMathAlphabet{\mathpzc}{OT1}{pzc}{m}{it}

\title{A dynamical proof of the van der Corput inequality}

\begin{document}

\begin{abstract}
  We provide a dynamical proof of the van der Corput inequality for sequences in Hilbert spaces 
  that is based on the Furstenberg correspondence principle. This is done by reducing the inequality to the mean 
  ergodic theorem for contractions on Hilbert spaces.
  The key difficulty therein is that the Furstenberg correspondence principle is, a priori, limited to scalar-valued
  sequences. We therefore discuss how interpreting the Furstenberg correspondence 
  principle via the Gelfand--Naimark--Segal construction for $\uC^*$-algebras allows to study
  not just scalar but general Hilbert space-valued
  sequences in terms of unitary operators. This yields a proof of the
  van der Corput inequality in the spirit of the Furstenberg correspondence principle and
  the flexibility of this method is discussed via new proofs for different variants of the 
  inequality.
\end{abstract}

\maketitle

The \emph{van der Corput inequality} (also known as \emph{van der Corput lemma} or \emph{van der Corput trick}) is a well-known 
and versatile tool in ergodic theory commonly used for complexity reduction since it allows to study the decay of norms 
in terms of the decay of correlations. It can be used to prove van der 
Corput's difference theorem on equidistribution of sequences (see, e.g. \cite[Section 3]{KuNi1974} or \cite{Tao2008}), 
to show that every weakly mixing system is weakly mixing of all orders (see \cite[Theorem 9.31]{EFHN2015}), or 
to obtain convergence to zero in the proofs of weighted, subsequential, polynomial and multiple ergodic theorems as 
well as Wiener--Wintner results (see, e.g., \cite{Kra2007}, \cite[Section 7.4]{EiWa2011}, \cite[Chapter 21]{EFHN2015} 
and \cite{ISEM2018}). Here is a common formulation of the inequality (see, e.g., \cite[Lemma 21.5]{HoKr2018}).

\begin{theorem*}
	For every bounded sequence $(u_n)_{n \in \N}$ in a Hilbert space $H$ the inequality
		\begin{align*}
			\limsup_{N \rightarrow \infty} \left\|\frac{1}{N} \sum_{n=1}^N u_n\right\|^2 
			\leq \limsup_{J \rightarrow \infty} \frac{1}{J}\sum_{j=1}^J \limsup_{N \rightarrow \infty} \frac{1}{N}\sum_{n=1}^{N} \Re(u_n|u_{n+j})
		\end{align*}
	holds.
\end{theorem*}
The standard proof of the inequality rests on an application of the Cauchy--Schwarz inequality as well as some elementary computations 
(see, e.g., the proof of \cite[Lemma 21.5]{HoKr2018}). There also exist more conceptual approaches, for example by means of 
positive definite functions, dilation theory, and the mean ergodic theorem (see \cite[Theorem 2.12]{BeMo2016} and the blog post 
\cite{More2015}). This article provides a new perspective on the inequality in terms of the Furstenberg correspondence principle.

Given a bounded sequence of scalars, 
the Furstenberg correspondence principle provides a universal way to study the statistical properties
of the sequence via dynamical systems. It does so by constructing Hilbert spaces and unitary operators
that encode the statistical behavior of the sequence in dynamical terms.
Below we show that one can use the strategy laid out by the correspondence principle 
to readily derive the \emph{scalar} van der Corput inequality.%
\footnote{Related but differing approaches are also present in the blog posts \cite{Tao2014} 
by Tao and \cite{More2018} by Moreira in the context of the van der Corput lemma.} Thereby, the scalar van der Corput inequality reduces to the mean ergodic 
theorem and a simple application of the Cauchy--Schwarz inequality.

The interest of reproving such a well-known inequality with a less elementary proof is twofold: 
on the one hand, such a conceptual proof provides a simple way to derive and understand not only 
the van der Corput inequality but also similar inequalities.
More importantly, however, only the scalar-valued
case can be treated in this way since, classically, the Furstenberg correspondence principle only concerns sequences of numbers. Given 
the importance of the correspondence principle as a bridge between number theory and ergodic theory, this begs the question: 
Does there exist some version of the correspondence principle
for Hilbert space-valued sequences? We address this question
in the first two sections by showing that, also in the case of Hilbert space-valued sequences,
it is possible to construct a Hilbert space and a unitary operator acting on it which encode 
properties of the sequence. We show that this allows, in complete 
analogy to the scalar case, to reduce the general Hilbert space-valued van der Corput inequality 
to the mean ergodic theorem and the Cauchy--Schwarz inequality. Since the key tool in the proof 
is the Gelfand--Naimark--Segal construction for $\uC^*$-algebras, the remaining sections discuss 
how these techniques allow to derive several other van der Corput-type inequalities on $\uC^*$-algebras 
and Hilbert-$\uC^*$-modules. This leads to new proofs of different uniform versions of the 
van der Corput inequality, see \cref{unif1}, \cref{unif2} and \cref{unif3}.

\textbf{Organization of the article.} Section 1 is a short recall of the correspondence principle
and explains the difficulty of extending it to the vector-valued setting. Section 2 then presents all key ideas by giving 
a short dynamical proof of the scalar van der Corput inequality in \cref{vdc1} and then extending these ideas 
to a proof of the general van der Corput inequality in \cref{vdchilbertvalued}. We generalize these
methods to $\uC^*$-algebras in Section 3 and to Hilbert modules in Section 4. In Section 5 we show 
how these extensions allow to recover different uniform versions of the van der Corput inequality. 
The article concludes with a discussion on extensions to F{\o}lner nets and more general groups in 
Section 6.

\section{Motivation: The Furstenberg correspondence principle}\label{sec:motivation}

Given a bounded sequence $(a_n)_n \in \ell^\infty(\N)$ of scalars, the 
Furstenberg correspondence principle provides a way to encode the sequence in a
dynamical system. Classically, this is done by considering the closure 
$A \defeq \overline{\{a_n \mid n\in\N\}}$ of its range,
forming the symbolic shift system $(A^\N, \tau)$ with
$\tau((x_n)_n) = (x_{n+1})_n$, and passing to the subshift $(K; \tau)$ generated by the 
sequence $(a_n)_n$, i.e., $K \defeq \overline{\{ \tau^k((a_n)_n) \mid k\in\N_0\}}$.
(Note that $A^\N$ and hence $K$ is compact since $A$ is a closed, bounded subset of $\C$.)
For the study of statistical properties of the sequence $(a_n)_n$, i.e., the asymptotic
behavior of averages, one can then pass from the topological system $(K; \tau)$ to the 
measure-preserving system $(K, \mathscr{B}, \mu; \tau)$ where $\mathscr{B}$ denotes the 
Borel $\sigma$-algebra of $K$ and $\mu$ is some $\tau$-invariant 
probability measure that represents, e.g., density properties of the original sequence $(a_n)_n$. 
Thus, the problem of understanding statistical properties of the sequence $(a_n)_n$ can be 
translated into the study of the system 
$(K, \mathscr{B}, \mu; \tau)$ and the Koopman operator 
$T_\tau \colon \uL^2(K, \mathscr{B}, \mu) \to \uL^2(K, \mathscr{B}, \mu)$, $f \mapsto f \circ \tau$. This approach has 
proven to be a critical interface between number theory and ergodic theory with applications
ranging from Furstenberg's celebrated proof of Szemerédi's theorem (see \cite{Furs1977}) to
countless further applications and developments such as \cite{MRR2019}, \cite{TaoTer2019}, 
\cite{Fran2017}, or \cite{FranHostKra2013}.

Unfortunately, this approach via symbolic dynamics must inevitably fail for 
problems involving sequences $(u_n)_n \in \ell^\infty(\N; H)$ in infinite-dimensional Hilbert 
spaces $H$, for lack of the 
compactness of the unit ball. To overcome this obstacle, 
we recall an alternative approach to the correspondence principle commonly used in Ramsey
theory that Furstenberg himself was already aware of, see \cite[p.\ 50]{DundasSkau2020}: Using the Stone--\v{C}ech compactification $\beta\N$ of the natural numbers and the 
identification 
\begin{equation*}
  \ell^\infty(\N) \cong \uC(\beta\N),
\end{equation*}
every sequence $(a_n)_n$ corresponds uniquely to its continuous extension $a\in\uC(\beta\N)$.
Since the map $\tau\colon \N \to \N$, $x \mapsto x + 1$ extends continuously to $\beta\N$,
the sequence $(a_n)_n$ is encoded in the topological dynamical system $(\beta\N, \tau)$ via 
$a_n = a(\tau^{n-1}(1))$. The 
invariant measures $\mu$ of this system can be identified with the Banach limits on $\ell^\infty(\N) \cong C(\beta\N)$,
i.e., the shift-invariant positive linear forms, and one can again study the sequence $(a_n)_n$ in terms 
of the dynamical system $(\beta\N, \mathscr{B}, \mu; \tau)$ and its Koopman operator $T_\tau\colon \uL^2(\beta\N, \mathscr{B}, \mu) 
\to \uL^2(\beta\N, \mathscr{B}, \mu)$.

This approach does not immediately extend either to vector-valued sequences $(u_n)_n$ in some Hilbert space $H$
since the isomorphism $\ell^\infty(\N; H) \cong \uC(\beta\N; H)$ holds if and only if $H$ is finite-dimensional. However, a conceptual view of $\uL^2(\beta\N, \mathscr{B}, \mu)$ is to regard
it as the Hilbert space $\uC(\beta\N)^{(\cdot|\cdot)_\mu}$ obtained from the $\uC^*$-algebra $\uC(\beta\N)$ 
by means of the Gelfand--Naimark--Segal construction for the state $\mu \in C(\beta\N)'$. Thus,  
by constructing Hilbert spaces $\mathcal{H}_\mu$ out of $\ell^\infty(\N; H)$, we may achieve a generalization 
of the Hilbert spaces $\uL^2(\beta\N, \mathscr{B}, \mu) \cong \ell^\infty(\N)^{(\cdot|\cdot)_\mu}$ 
to the Hilbert space-valued setting. As we show below, this can be done in terms of a more general GNS construction for 
\emph{Hilbert modules} instead of $\uC^*$-algebras. We carry these arguments out in fairly high generality
to underline that the GNS construction is the key tool needed. However, we start with the most 
basic example in the next section and then develop the general ideas in the following sections.

\section{Van der Corput via the correspondence principle} \label{sec:basic}

We start with a short proof of the scalar van der Corput inequality based on the correspondence principle, 
followed by a second proof in the general case that adapts this idea to the vector-valued setting.

\begin{theorem}\label{vdc1}
	For every bounded sequence $(u_n)_{n \in \N}$ of complex numbers the inequality
		\begin{align*}
			\limsup_{N \rightarrow \infty} \left|\frac{1}{N} \sum_{n=1}^N u_n\right|^2 \leq 
			\liminf_{J \rightarrow \infty} \frac{1}{J}\sum_{j=1}^J \limsup_{N \rightarrow \infty} 
			\frac{1}{N}\sum_{n=1}^{N} \Re (u_n \overline{u_{n+j}})
		\end{align*}
	holds.
\end{theorem}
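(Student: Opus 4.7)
The plan is to execute the strategy outlined by the Furstenberg correspondence principle as advertised in the introduction: encode the sequence on a Hilbert space equipped with the Koopman operator of the shift, then reduce the inequality to the mean ergodic theorem and Cauchy--Schwarz.

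First I would pick a subsequence $(N_k)_k$ along which $\left|\frac{1}{N_k}\sum_{n=1}^{N_k} u_n\right|^2$ converges to the left-hand side; passing to a further subsequence one may additionally assume that $c \defeq \lim_k \frac{1}{N_k}\sum_{n=1}^{N_k} u_n$ exists, so that $|c|^2$ equals the LHS. By Banach--Alaoglu, after one final refinement, the Cesàro functionals $f \mapsto \frac{1}{N_k}\sum_{n=1}^{N_k} f(n)$ on $\ell^\infty(\N) \cong \uC(\beta\N)$ converge in the weak-$*$ topology to a state $\mu$. A routine telescoping estimate shows that $\mu$ is invariant under the shift $\tau$; that is, $\mu$ is a Banach limit.

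Next I would invoke the GNS construction for $(\uC(\beta\N), \mu)$ to obtain the Hilbert space $\uL^2(\beta\N, \mu)$, the canonical cyclic vector $\mathbf{1}$ (the class of the constant function $1$), and the induced Koopman operator $T$ of $\tau$, which is an isometry by shift-invariance of $\mu$. With the inner product convention $(f\,|\,g)_\mu \defeq \mu(\bar f g)$, one has $c = \mu(u) = (\mathbf{1}\,|\,u)_\mu$ and $\|\mathbf{1}\|^2 = 1$. The mean ergodic theorem applied to $T$ then yields
\[
	\frac{1}{J}\sum_{j=1}^J T^j u \;\longrightarrow\; Pu \qquad \text{in } \uL^2(\beta\N,\mu),
\]
where $P$ denotes the orthogonal projection onto the $T$-invariant subspace. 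Since $\mathbf{1}$ is $T$-invariant, $(\mathbf{1}\,|\,u)_\mu = (\mathbf{1}\,|\,Pu)_\mu$, and Cauchy--Schwarz delivers
\[
	|c|^2 \;=\; |(\mathbf{1}\,|\,Pu)_\mu|^2 \;\leq\; \|Pu\|^2 \;=\; (Pu\,|\,u)_\mu \;=\; \lim_{J \to \infty} \frac{1}{J}\sum_{j=1}^J \Re\,\mu\bigl(u \cdot \overline{u \circ \tau^j}\bigr).
\]

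Finally, since $\mu$ is the weak-$*$ limit of Cesàro averages along $(N_k)_k$, the quantity $\Re\,\mu(u \cdot \overline{u \circ \tau^j})$ is a subsequential limit of $\frac{1}{N}\sum_{n=1}^N \Re(u_n \overline{u_{n+j}})$ and hence bounded above by $\limsup_{N \to \infty}\frac{1}{N}\sum_{n=1}^N \Re(u_n \overline{u_{n+j}})$. Averaging in $j$ and taking $\liminf$ in $J$ combines with the previous display to give the inequality. The main subtlety requiring care is that the \emph{same} state $\mu$ has to simultaneously capture the $\limsup$ on the LHS exactly while bounding the RHS correlations only from below; this is arranged by the nested extraction of subsequences prior to the application of Banach--Alaoglu, so that the resulting Banach limit agrees with all the relevant averages at once.
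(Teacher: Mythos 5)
Your proposal is correct and follows essentially the same route as the paper's proof: extract a shift-invariant state $\mu$ as a weak* limit of Cesàro averages of Dirac functionals (the paper realizes the nested subsequence extraction via a single ultrafilter $p\in\beta\N\setminus\N$), pass to $\uL^2(\beta\N,\mu)$ with the Koopman operator of the shift, and combine the mean ergodic theorem with Cauchy--Schwarz. The only cosmetic difference is that you obtain $\|Pu\|^2=(Pu\,|\,u)_\mu$ directly from self-adjointness and idempotence of the mean ergodic projection, whereas the paper routes this step through the conditional-expectation property of $P_\mu$; both are valid, and your variant is in fact the one the paper itself uses in its more general C*-algebraic version.
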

\begin{proof}
 Let $(u_n)_n\in \ell^\infty(\N)$ and $p\in\beta\N\setminus\N$ be a nonprincipal ultrafilter
 such that 
 \begin{equation*}
  \limsup_{N\to\infty} \left| \frac{1}{N} \sum_{n=1}^N u_n \right|^2 = \lim_{N \to p} \left| \frac{1}{N} \sum_{n=1}^N u_n \right|^2.
 \end{equation*}
 Denote by $u\in\uC(\beta\N)$ the canonical extension of $(u_n)_n$ to $\beta\N$ and for 
 each $n\in\N$ let $\delta_n\in \uC(\beta\N)'$ be the Dirac measure at $n$. Then
 \begin{equation*}
  \lim_{N\to p} \frac{1}{N} \sum_{n=1}^N u_n =
  \lim_{N\to p} \left\langle u, \frac{1}{N}\sum_{n=1}^N \delta_n \right\rangle =
  \left\langle u, \underbrace{\lim_{N\to p}\frac{1}{N}\sum_{n=1}^N \delta_n}_{\eqdef \mu} \right\rangle =
  \int_{\beta\N} u \dmu.
 \end{equation*}
 Here, the probability measure $\mu\in\uC(\beta\N)'$ exists as a weak*-limit because 
 any limit along an ultrafilter exists in a compact space. Now, let 
 $T_\tau\colon \uL^2(\beta\N, \mathscr{B}, \mu)\to\uL^2(\beta\N, \mathscr{B}, \mu)$ be the Koopman operator for 
 the   right   shift $\tau\colon\beta\N \to \beta\N$ and let $P_\mu$ denote its mean ergodic projection.
 By the Cauchy--Schwarz inequality and since $P_\mu$ is a conditional expectation, we
 conclude that 
 \begin{align*}
  \limsup_{N\to\infty} \left| \frac{1}{N} \sum_{n=1}^N u_n \right|^2 
  &= \left| \int_{\beta\N} u \dmu \right|^2
  = \left| \int_{\beta\N} P_\mu u \dmu \right|^2 
  \leq  \int_{\beta\N} |P_\mu u|^2 \dmu \\
  &= \int_{\beta\N} P_\mu u P_\mu \overline{u} \dmu 
  = \int_{\beta\N} P_\mu( u P_\mu \overline{u}) \dmu
  = \int_{\beta\N} u P_\mu \overline{u} \dmu \\
  &= \lim_{J\to\infty} \frac{1}{J} \sum_{j=1}^J \int_{\beta\N} u T_\tau^j \overline{u} \dmu \\
  &= \lim_{J\to\infty} \frac{1}{J} \sum_{j=1}^J \lim_{N\to p} \frac{1}{N} \sum_{n=1}^N u_n \overline{u_{n+j}} \\
  &\leq \liminf_{J \rightarrow \infty} \frac{1}{J}\sum_{j=1}^J \limsup_{N \rightarrow \infty} 
  \frac{1}{N}\sum_{n=1}^{N} \Re (u_n \overline{u_{n+j}}).
 \end{align*}
\end{proof}

To extend this idea to the general Hilbert space case, let $\mu\in\uC(\beta\N)'$ be a 
shift-invariant probability measure as in the proof above. As described in \cref{sec:motivation}, $\mu$ is 
a \emph{state} on the $\uC^*$-algebra $\uC(\beta\N)$, i.e., a positive linear form of norm 1. As such, $\mu$ 
gives rise to a sesquilinear form $(\cdot|\cdot)_\mu$ which yields the Hilbert space $\uL^2(\beta\N, \mathscr{B}, \mu)$
by means of the Gelfand--Naimark--Segal construction.
Now, let $H$ be a Hilbert space and consider the space $\ell^\infty(\N, H)$ of bounded 
sequences in $H$. This space is no longer a $\uC^*$-algebra but still a \emph{$\uC^*$-Hilbert module}
by means of the $\ell^\infty(\N)$-valued inner product 
\begin{align*}
 (\cdot| \cdot)\colon \ell^\infty(\N, H)\times \ell^\infty(\N, H) \to \ell^\infty(\N), \quad 
 ((u_n)_n, (v_n)_n) \mapsto \left((u_n|v_n)_H\right)_n.
\end{align*}
(We will introduce Hilbert modules formally in \cref{sec:modules}.)
Next, we can consider the sesquilinear form 
\begin{equation*}
 (\cdot|\cdot)_\mu \colon \ell^\infty(\N, H)\times \ell^\infty(\N, H) \to \C, \quad 
 ((u_n)_n, (v_n)_n) \mapsto \mu\big(((u_n)_n|(v_n)_n)\big).
\end{equation*}
Since this sesquilinear form is positive semi-definite (which also implies that it is hermitian), it induces an 
inner product on the quotient 
\begin{equation*}
  \ell^\infty(\N, H)/\{x\in \ell^\infty(\N, H) \mid (x|x)_\mu = 0\}.
\end{equation*}
We denote its completion by $\mathcal{H}_\mu$. Note here 
that $\{x\in \ell^\infty(\N, H) \mid (x|x)_\mu = 0\}$ is actually a
subspace of $\ell^\infty(\N, H)$ by the Cauchy-Schwarz inequality for positive semi-definite sesquilinear forms (see, e.g., \cite[Inequality 1.4]{Conw1985}).
%\begin{itemize}
% \item the Cauchy--Schwarz inequality for Hilbert modules: We will state the inequality 
% in full generality later but for $x, y\in \ell^\infty(\N, H)$ it simply states that 
% \begin{equation*}
%  |(x|y)|^2 \leq (x|x)(y|y)
% \end{equation*}
% which follows directly by applying the Cauchy--Schwarz inequality for $H$ in each component.
% \item the Cauchy--Schwarz inequality for states on $\uC^*$-algebras (see \cite[Proposition 9.5]{Take1979}): for 
% a state $\phi$ on a $\uC^*$-algebra $A$ and $x, y\in A$, 
% \begin{equation*}
%  |\phi(y^*x)|^2 \leq \phi(x^*x)\phi(y^*y).
% \end{equation*}
%\end{itemize}
 If $\mu = \lim_{N\to p}\frac{1}{N}\sum_{n=1}^N \delta_n$ for some ultrafilter $p\in\beta\N\setminus\N$,   it can be shown that $\mathcal{H}_\mu$ is isomorphic to the quotient
\begin{equation*}
 \left\{ u\colon \N \to H \mmid \limsup_{N \to p} \frac{1}{N} \sum_{n=1}^N \|u_n\|^2 < +\infty \right\}
 /\left\{ u\colon \N \to H \mmid \limsup_{N \to p} \frac{1}{N} \sum_{n=1}^N \|u_n\|^2 = 0\right\}
\end{equation*}
and that under this identification the scalar product takes the form 
\begin{equation*}
 \big([(u_n)_n] | [(v_n)_n]\big)_\mu = \lim_{N\to p} \frac{1}{N}\sum_{n=1}^N (u_n|v_n)_H.
\end{equation*}
Related spaces also play an essential role in \cite{MRR2019} to capture asymptotics of sequences along
F{\o}lner sequences (see in particular Remark 3.2 for when their spaces are complete),
but we will not need this representation here. The nontrivial part of proving this representation is 
to show that the above-defined space is complete; the reader can find the relevant ideas in \cite[Section II.2]{BoFo1944}.
The only thing we will need below is that the shift $T\colon \ell^\infty(\N, H) \to \ell^\infty(\N, H)$,
$(u_n)_n \mapsto (u_{n+1})_n$ induces a contraction on $\mathcal{H}_\mu$. To see this, denote by 
$S\colon \ell^\infty(\N) \to \ell^\infty(\N)$ the shift $(u_n) \mapsto (u_{n+1})_n$ and observe that
for $x, y\in \ell^\infty(\N, H)$, $(Tx|Ty) = S(x|y)$ and so $(Tx|Ty)_\mu = (x|y)_\mu$ by $S$-invariance
of $\mu$. Hence, if $x\in \ell^\infty(\N, H)$ is such that 
$(x|x)_\mu = 0$, then also $(Tx|Tx)_\mu = 0$. This shows that $T$ induces an isometry on $\mathcal{H}_\mu$
which we denote by $T_\mu$.\footnote{In 
fact, it is not hard to show that the opposite shift also induces an isometry on $\mathcal{H}_\mu$ that is 
inverse to $T_\mu$. Hence, $T_\mu\colon \mathcal{H}_\mu \to \mathcal{H}_\mu$ is in fact unitary.}
With this set-up, we are able to prove the full van der Corput inequality. The only notable change is 
that we interpret  the left-hand side in terms of an invariant sesqilinear form instead of an invariant linear form.

\begin{theorem}\label{vdchilbertvalued}
	For every bounded sequence $(u_n)_{n \in \N}$ in a Hilbert space $H$ the inequality
		\begin{align*}
			\limsup_{N \rightarrow \infty} \left\|\frac{1}{N} \sum_{n=1}^N u_n\right\|^2 
			\leq \liminf_{J \rightarrow \infty} \frac{1}{J}\sum_{j=1}^J 
			\limsup_{N \rightarrow \infty} \frac{1}{N}\sum_{n=1}^{N} \Re(u_n|u_{n+j})
		\end{align*}
	holds.
\end{theorem}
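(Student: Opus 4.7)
The plan is to follow the strategy of \cref{vdc1}, invoking the mean ergodic theorem and Cauchy--Schwarz, but interpreting the left-hand side via the sesquilinear form on $\mathcal{H}_\mu$ rather than via a linear form.

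First, pick a nonprincipal ultrafilter $p\in\beta\N\setminus\N$ with
\begin{equation*}
\limsup_{N\to\infty}\Big\|\frac{1}{N}\sum_{n=1}^N u_n\Big\|_H^2 = \lim_{N\to p}\|a_N\|_H^2,
\end{equation*}
where $a_N := \frac{1}{N}\sum_{n=1}^N u_n$, let $\mu := \lim_{N\to p}\frac{1}{N}\sum_{n=1}^N \delta_n$ be the associated shift-invariant probability measure, and set $[x] := [(u_n)_n] \in \mathcal{H}_\mu$. Denote by $T_\mu$ the (unitary) shift on $\mathcal{H}_\mu$ and by $P_\mu$ the orthogonal projection onto its fixed vectors.

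The core step is to establish
\begin{equation*}
\lim_{N\to p}\|a_N\|_H^2 \leq \|P_\mu [x]\|_\mu^2.
\end{equation*}
For this, fix $J \in \N$ and set $v_n^{(J)} := \frac{1}{J}\sum_{j=0}^{J-1} u_{n+j}$. A direct boundary-term calculation shows $\|a_N - \frac{1}{N}\sum_{n=1}^N v_n^{(J)}\|_H = O(J/N)$, so the two averages have the same limit in norm along $p$. The vector-valued Cauchy--Schwarz inequality $\|\frac{1}{N}\sum_n w_n\|_H^2 \leq \frac{1}{N}\sum_n \|w_n\|_H^2$ then gives
\begin{equation*}
\lim_{N\to p}\|a_N\|_H^2 \leq \lim_{N\to p}\frac{1}{N}\sum_{n=1}^N \|v_n^{(J)}\|_H^2 = \Big\|\frac{1}{J}\sum_{j=0}^{J-1} T_\mu^j [x]\Big\|_\mu^2,
\end{equation*}
the last equality by the defining formula for $(\cdot|\cdot)_\mu$. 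Letting $J\to\infty$ and invoking the mean ergodic theorem for the contraction $T_\mu$ on $\mathcal{H}_\mu$ completes this step.

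It remains to unfold $\|P_\mu [x]\|_\mu^2$ into correlations. By self-adjointness of $P_\mu$ and the mean ergodic theorem,
\begin{equation*}
\|P_\mu [x]\|_\mu^2 = ([x] \mid P_\mu [x])_\mu = \lim_{J\to\infty}\frac{1}{J}\sum_{j=1}^J \lim_{N\to p}\frac{1}{N}\sum_{n=1}^N (u_n|u_{n+j})_H.
\end{equation*}
Since the left-hand side is real, one may take real parts under the sum, bound $\lim_{N\to p} \leq \limsup_N$ pointwise in $j$, and replace $\lim_J$ by $\liminf_J$ to arrive at the right-hand side of the theorem. Chaining the two bounds finishes the proof.

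The main obstacle, absent in the scalar case, is that $\lim_{N\to p}\|a_N\|_H^2$ is not the square of any obvious pairing in $\mathcal{H}_\mu$: passing to the weak-$\lim_p$ element $z \in H$ of the Cesàro averages only yields $\|z\|_H^2 \leq \lim_{N\to p}\|a_N\|_H^2$ by weak lower semicontinuity, which is the wrong direction. The smoothing by $J$ combined with Cauchy--Schwarz bridges the gap, and the mean ergodic theorem is then used twice: once to absorb the parameter $J$ and obtain the bound by $\|P_\mu[x]\|_\mu^2$, and once again to unfold $\|P_\mu[x]\|_\mu^2$ into correlations.
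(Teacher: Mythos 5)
Your proof is correct, and its overall architecture is the one the paper uses: realize the $\limsup$ along a nonprincipal ultrafilter $p$, form the shift-invariant state $\mu$ and the GNS-type Hilbert space $\mathcal{H}_\mu$, and then apply the mean ergodic theorem twice together with Cauchy--Schwarz. The one place where you genuinely deviate is the key step, namely the bound $\lim_{N\to p}\|a_N\|^2 \leq \|P_\mu[x]\|_\mu^2$. The paper packages the left-hand side as $B_\mu([u],[u])$ for a shift-invariant sesquilinear form $B$ on $\ell^\infty(\N,H)$, proves $|B(v,w)|\leq \|[v]\|_\mu\|[w]\|_\mu$ by a double Cauchy--Schwarz, and then uses invariance plus contractivity to get $B_\mu([u],[u]) = B_\mu(P_\mu[u],P_\mu[u]) \leq \|P_\mu[u]\|_\mu^2$. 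You instead smooth the Ces\`aro averages by an inner average over $j\in\{0,\dots,J-1\}$, absorb the $O(J/N)$ boundary error along $p$, and use the elementary bound $\bigl\|\tfrac{1}{N}\sum_n w_n\bigr\|^2 \leq \tfrac{1}{N}\sum_n\|w_n\|^2$ to land directly on $\bigl\|\tfrac{1}{J}\sum_{j=0}^{J-1}T_\mu^j[x]\bigr\|_\mu^2$, which the mean ergodic theorem sends to $\|P_\mu[x]\|_\mu^2$. These are two renderings of the same computation --- your smoothing is exactly the invariance of $B$ made explicit at the level of finite averages --- but they buy different things: your version is more self-contained and never needs to verify that $B$ descends to a well-defined contractive form on $\mathcal{H}_\mu$, while the paper's formulation of $B_\mu$ is the one that transfers verbatim to the Hilbert-module generalization in \cref{vdc3} and its uniform corollaries. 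Your closing observation about why the vector-valued left-hand side is not a pairing in $\mathcal{H}_\mu$ (weak lower semicontinuity goes the wrong way) correctly identifies the obstruction that forces this extra step and that is absent from the scalar case of \cref{vdc1}.
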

\begin{proof}
 Let $u = (u_n)_n \in \ell^\infty(\N, H)$ be a bounded sequence and let $p\in\beta\N \setminus \N$
 be a nonprincipal ultrafilter such that 
 \begin{equation*}
  \limsup_{N \rightarrow \infty} \left\|\frac{1}{N} \sum_{n=1}^N u_n\right\|^2  
  = \lim_{N\to p} \left\|\frac{1}{N} \sum_{n=1}^N u_n\right\|^2
 \end{equation*}
 and define a shift-invariant state $\mu$ on $\ell^\infty(\N)$ via 
 \begin{equation*}
  \mu \defeq \lim_{N\to p} \frac{1}{N} \sum_{n=1}^N \delta_n.
 \end{equation*}
 Consider the sesquilinear form 
 \begin{equation*}
  B\colon\ell^\infty(\N, H)\times\ell^\infty(\N, H) \to \C, \quad 
  (v, w) \mapsto \lim_{N\to p} \left( \frac{1}{N}\sum_{n=1}^N v_n \mmid \frac{1}{N}\sum_{n=1}^N w_n \right)
 \end{equation*}
 and observe that 
 \begin{equation*}
  B(u, u) = \limsup_{N \rightarrow \infty} \left\|\frac{1}{N} \sum_{n=1}^N u_n\right\|^2.
 \end{equation*}
   We show that   $B$ induces a contractive sesquilinear form on $\mathcal{H}_\mu$, i.e.,  
 \begin{equation}\label{eq:contract}
  |B(v, w)| \leq \sqrt{(v|v)_\mu} \sqrt{(w|w)_\mu} = \|[v]\|_{\mathcal{H}_\mu} \|[w]\|_{\mathcal{H}_\mu}.
 \end{equation}
 for all $v, w\in\ell^\infty(\N, H)$.
   Take   $v, w\in\ell^\infty(\N, H)$ and note that for $N\in\N$ we can apply the Cauchy--Schwarz inequality 
 both in $H$ and in $\C^N$ to obtain
 \begin{align*}
  \left|\left( \frac{1}{N} \sum_{n=1}^N v_n \mmid \frac{1}{N}\sum_{n=1}^N w_n\right)\right| 
  &\leq \left\| \frac{1}{N}\sum_{n=1}^N v_n\right\| \left\| \frac{1}{N}\sum_{n=1}^N w_n\right\| 
  \leq \frac{1}{N} \sum_{n=1}^N \|v_n\| \frac{1}{N} \sum_{n=1}^N \|w_n\| \\
  &\leq \left(\frac{1}{N}\sum_{n=1}^N  \|v_n\|^2 \right)^{\frac{1}{2}}\left(\frac{1}{N}\sum_{n=1}^N \|w_n\|^2\right)^{\frac{1}{2}}.
 \end{align*}
 Taking the limit as $N \to p$ proves \eqref{eq:contract}. Therefore, $B$ induces a contractive sesquilinear
 form on $\mathcal{H}_\mu$ which we denote by $B_\mu$. Since for any $n, m\in\N$ and 
 $v, w\in \ell^\infty(\N, H)$ one has $B(S^nv, S^mw) = B(v, w)$, $B_\mu$ satisfies an analogous 
 invariance on $\mathcal{H}_\mu$. Denoting by $P_\mu$ the mean ergodic projection of $T_\mu$ acting on $\mathcal{H}_\mu$
 and using this invariance,
 we conclude that
 \begin{align*}
  \limsup_{N \rightarrow \infty} \left\|\frac{1}{N} \sum_{n=1}^N u_n\right\|^2   
  &= B_\mu([u], [u]) 
  = B_\mu(P_\mu [u], P_\mu [u])
  \leq \|P_\mu [u]\|_\mathcal{H_\mu}\|P_\mu [u]\|_{\mathcal{H_\mu}} \\
  &= ([u]|P_\mu [u])_{\mathcal{H}_\mu}
  = \lim_{J \to \infty} \frac{1}{J} \sum_{j=1}^J ([u]| S^j[u])_{\mathcal{H}_\mu} \\
  &= \lim_{J \to \infty} \frac{1}{J} \sum_{j=1}^J \lim_{N\to p} \frac{1}{N} \sum_{n=1}^N (u_n|u_{n+j}) \\ 
  &\leq \liminf_{J \rightarrow \infty} \frac{1}{J}\sum_{j=1}^J 
			\limsup_{N \rightarrow \infty} \frac{1}{N}\sum_{n=1}^{N} \Re(u_n|u_{n+j}).
 \end{align*}
\end{proof}

Having seen the key ideas in these simple cases, the following sections exploit 
that only very little is actually used in the proofs, allowing to cover generalizations in 
different directions and several versions of the van der Corput inequality. We believe that 
the method presented here could also find applications to other problems but this is beyond the scope of this article.

\section{Van der Corput on C*-algebras}

Since the above proof of the van der Corput inequality depends 
mostly on the GNS construction, we generalize it to arbitrary $\uC^*$-algebras in this 
section. To explain our approach, we start again with the scalar case of the original result.
\begin{theorem}
	For every bounded sequence $(u_n)_{n \in \N}$ of complex numbers the inequality
		\begin{align*}
			\limsup_{N \rightarrow \infty} \left|\frac{1}{N} \sum_{n=1}^N u_n\right|^2 \leq 
			\liminf_{J \rightarrow \infty} \frac{1}{J}\sum_{j=1}^J 
			\limsup_{N \rightarrow \infty} \frac{1}{N}\sum_{n=1}^{N} \Re (u_n \overline{u_{n+j}})
		\end{align*}
	holds.
\end{theorem}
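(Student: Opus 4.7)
My plan is to redo the proof of \cref{vdc1} in a way that only uses the GNS construction for the commutative unital $\uC^*$-algebra $A \defeq \ell^\infty(\N)$, without invoking the measure-theoretic picture on $\beta\N$ at all. This reformulation will then generalize verbatim to an arbitrary (possibly noncommutative) $\uC^*$-algebra in the sequel.

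First, I would fix a nonprincipal ultrafilter $p\in\beta\N\setminus\N$ realizing the limsup on the left-hand side, and define the shift-invariant state $\mu$ on $A$ by $\mu(a)\defeq\lim_{N\to p}\frac{1}{N}\sum_{n=1}^N a_n$. The GNS construction then produces a Hilbert space $\mathcal{H}_\mu$ with inner product $([a]|[b])_\mu = \mu(a^*b)$ and a cyclic unit vector $\Omega \defeq [\mathbf{1}]$. Since the left shift $S$ on $A$ is a $\mu$-preserving $*$-homomorphism, it induces an isometry $T_\mu$ on $\mathcal{H}_\mu$, and since $S\mathbf{1}=\mathbf{1}$, the cyclic vector $\Omega$ is fixed by $T_\mu$.

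The key observation replacing the integral formula used in \cref{vdc1} is the identity $\mu(a) = (\Omega \,|\, [a])_\mu$, valid for all $a\in A$. Letting $P_\mu$ denote the mean ergodic projection of $T_\mu$ onto $\mathrm{fix}(T_\mu)$, the fact that $P_\mu\Omega=\Omega$ together with self-adjointness of $P_\mu$ gives
\begin{equation*}
 \mu(u) = (\Omega\,|\,[u])_\mu = (P_\mu\Omega\,|\,[u])_\mu = (\Omega\,|\,P_\mu[u])_\mu.
\end{equation*}
Applying the Cauchy--Schwarz inequality and $\|\Omega\|_{\mathcal{H}_\mu}^2 = \mu(\mathbf{1}) = 1$ then yields $|\mu(u)|^2 \leq \|P_\mu[u]\|_{\mathcal{H}_\mu}^2 = (P_\mu[u]\,|\,[u])_\mu$, where the last equality uses that $P_\mu$ is an orthogonal projection.

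To finish, I would unfold the right-hand side using the mean ergodic theorem:
\begin{equation*}
  (P_\mu[u]\,|\,[u])_\mu
  = \lim_{J\to\infty}\frac{1}{J}\sum_{j=1}^J (T_\mu^j[u]\,|\,[u])_\mu
  = \lim_{J\to\infty}\frac{1}{J}\sum_{j=1}^J \lim_{N\to p}\frac{1}{N}\sum_{n=1}^N \overline{u_{n+j}}\,u_n.
\end{equation*}
Since the left-hand side is real, this quantity equals its real part, and bounding $\lim_{N\to p}$ by $\limsup_N$ and $\lim_{J\to\infty}$ by $\liminf_J$ delivers the inequality. The main conceptual step, and the one I expect to be the real content of the section, is recognizing that the representation $\mu(a) = (\Omega\,|\,[a])_\mu$ via the cyclic vector, combined with $\Omega\in\mathrm{fix}(T_\mu)$, entirely replaces the conditional-expectation manipulation used in \cref{vdc1}; everything else is a direct application of Cauchy--Schwarz and the mean ergodic theorem for contractions on Hilbert space, neither of which requires commutativity of the underlying $\uC^*$-algebra.
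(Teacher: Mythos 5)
Your proof is correct and is essentially the paper's argument: an invariant state obtained by averaging point evaluations, the GNS construction for that state, and the mean ergodic theorem for the induced contraction combined with the Cauchy--Schwarz inequality against the cyclic vector $[\mathbbm{1}]$ (this is exactly the route taken in the proofs of \cref{vdc1} and \cref{vdc2}, the latter of which merely replaces your ultrafilter limit by a weak* subnet limit of the states $C_{N_i}'\mu_i$ so as to handle general nets of states). The only cosmetic difference is that you transfer the mean ergodic projection onto $[u]$ via $P_\mu\Omega=\Omega$ and self-adjointness of $P_\mu$, whereas the paper uses the $S$-invariance of $\mu$ to insert the Ces\`aro averages before identifying the result with $(P_\mu[x]\mid[\mathbbm{1}])_\mu$; the two manipulations are equivalent.
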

In order to reformulate this in the language of $\uC^*$-algebras, we consider the 
$\uC^*$-algebra $\ell^\infty(\N)$ of all bounded complex sequences and the shift operator 
$S(v_n)_{n \in \N} \coloneqq (v_{n+1})_{n \in \N}$ for $(v_n)_{n \in \N} \in \ell^\infty(\N)$. 
The shift $S \in \mathscr{L}(\ell^\infty(\N))$ is a \emph{Markov operator} meaning that it is
	\begin{enumerate}[(i)]
		\item \emph{unital}, i.e., $S\mathbbm{1} = \mathbbm{1}$, and
		\item \emph{positive}, i.e., $Sx \geq 0$ for every $x \in  \ell^\infty(\N)$ with $x \geq 0$.
	\end{enumerate}
With the state defined by the point evaluation 
$\mu \coloneqq \delta_1 \colon \ell^\infty(\N) \rightarrow \C, \, (v_n)_{n \in \N} \mapsto v_1$ 
and $x \coloneqq (u_n)_{n \in \N} \in \ell^\infty(\N)$ we can rewrite the left-hand side of the desired inequality as 
	\begin{align*}
		\limsup_{N \rightarrow \infty} \left|\frac{1}{N} \sum_{n=1}^N u_n\right|^2  = \limsup_{N \rightarrow \infty} \left|\left\langle \frac{1}{N}\sum_{n=0}^{N-1}S^n x, \nu \right\rangle\right|^2.
	\end{align*}
Likewise, we obtain
	\begin{align*}
		\limsup_{J \rightarrow \infty} \frac{1}{J}\sum_{j=1}^J 
    \limsup_{N \rightarrow \infty} \frac{1}{N}\sum_{n=1}^{N} \Re u_n \overline{u_{n+j}}= 
		\limsup_{J \rightarrow \infty} \frac{1}{J}\sum_{j=1}^{J} 
		\limsup_{N \rightarrow \infty} \frac{1}{N} \sum_{n=0}^{N-1} \Re\langle S^n((S^jx)^* x), \mu\rangle
	\end{align*}
for the right-hand side.

With these considerations, we can obtain \cref{vdc1} by proving that 
for every Markov operator $S \in \mathscr{L}(A)$ on a unital commutative $\mathrm{C}^*$-algebra $A$ 
and every state $\mu \in A'$ one has
	\begin{align}\label{ineq1}
		\limsup_{N \rightarrow \infty} \left|\left\langle \frac{1}{N}\sum_{n=0}^{N-1}S^n x, \mu \right\rangle\right|^2 \leq 
		\liminf_{J \rightarrow \infty} \frac{1}{J}\sum_{j=1}^{J} 
		\limsup_{N \rightarrow \infty} \frac{1}{N} \sum_{n=0}^{N-1} \Re \langle S^n((S^jx)^* x), \mu\rangle.
	\end{align}
In fact, our methods even allow to extend inequality (\ref{ineq1}) to non-commutative $\mathrm{C}^*$-algebras. 

The following operators are the analogue of Markov operators in the non-commutative situation.
Their only essential property for the following is that they define contractions on all induced GNS Hilbert spaces.
\begin{definition}
	A bounded operator $S \in \mathscr{L}(A)$ on a unital $\mathrm{C}^*$-algebra $A$ is a \emph{Markov--Schwarz operator} if 
	\begin{enumerate}[(i)]
		\item it is \emph{unital}, i.e., $S\mathbbm{1} = \mathbbm{1}$, and
		\item it satisfies the \emph{Schwarz inequality}, i.e., $(Sx)^* Sx \leq S(x^*x)$ for all $x \in A$.
	\end{enumerate}
\end{definition}
Every unital $2$-positive operator is Markov--Schwarz. In particular, every unital $^*$-homomorphism is a 
Markov--Schwarz operator. If $A$ is commutative, then the concepts of Markov and Markov--Schwarz operators coincide. We refer to \cite[Chapters 1 and 2]{Stor2013} for an introduction to positive and Schwarz operators on $\mathrm{C}^*$-algebras. 

We show the following version of inequality (\ref{ineq1}) for Markov--Schwarz operators by employing the GNS-construction for $\mathrm{C}^*$-algebras and the mean ergodic theorem for contractions on Hilbert spaces. Here and in the following, we denote by $\mathrm{S}(A) \subset A'$ the set of states on a unital $\mathrm{C}^*$-algebra $A$. This is a convex subset of $A'$ and, equipped with the weak* topology, a compact space (see, e.g., \cite[Subsection 3.2.1]{Pede2018}).
	\begin{theorem}\label{vdc2}
		Let $S \in \mathscr{L}(A)$ be a Markov--Schwarz operator on a unital $\mathrm{C}^*$-algebra $A$. Moreover, let $x \in A$, $(N_i)_{i \in I}$ a subnet of $\N$ and $\mu_i \in \mathrm{S}(A)$ a state on $A$ for every $i \in I$. Then
			\begin{align*}
				\limsup_{i} \left|\left\langle \frac{1}{N_i} \sum_{n=0}^{N_i-1} S^n x, \mu_i\right\rangle\right|^2 \leq \liminf_{J \rightarrow \infty} \frac{1}{J}\sum_{j=1}^{J} \limsup_{i} \frac{1}{N_i} \sum_{n=0}^{N_i-1} \Re \langle S^n((S^jx)^* x), \mu_i\rangle.
			\end{align*}
	\end{theorem}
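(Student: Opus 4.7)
The plan is to mimic the proofs of \cref{vdc1} and \cref{vdchilbertvalued}: replace the GNS construction for $\mu = \lim_{N\to p}\frac{1}{N}\sum_{n=1}^N\delta_n$ on $\uC(\beta\N)$ by a GNS construction for a suitable state on $A$, use the Markov--Schwarz condition to descend $S$ to a contraction on the resulting Hilbert space, and let the mean ergodic theorem finish the job. The Markov--Schwarz property is precisely what is needed for this descent step.

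First, I would refine $(N_i)_{i\in I}$ to a subnet on which the outer $\limsup_i$ on the left-hand side is attained as a limit, and on which the Ces\`aro-averaged states
\begin{equation*}
 \nu_i \defeq \frac{1}{N_i}\sum_{n=0}^{N_i-1}(S^*)^n\mu_i \in \mathrm{S}(A)
\end{equation*}
converge weakly-$*$ to some $\mu\in\mathrm{S}(A)$; both refinements are possible by weak-$*$ compactness of $\mathrm{S}(A)$. Since $\|\nu_i - S^*\nu_i\| = \|\tfrac{1}{N_i}(\mu_i - (S^*)^{N_i}\mu_i)\| \leq \tfrac{2}{N_i}\to 0$, the limit state satisfies $\mu\circ S = \mu$. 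Moreover
\begin{equation*}
 \left\langle \frac{1}{N_i}\sum_{n=0}^{N_i-1}S^n x,\mu_i \right\rangle = \langle x,\nu_i\rangle \longrightarrow \mu(x),
\end{equation*}
so the left-hand side of the theorem equals $|\mu(x)|^2$.

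Next, perform the GNS construction for $\mu$: let $H_\mu$ be the Hilbert space completion of $A$ modulo the null space of $(a|b)_\mu \defeq \mu(b^*a)$, and write $[a]\in H_\mu$ for the class of $a\in A$. The Schwarz inequality $(Sa)^*Sa \leq S(a^*a)$ combined with $\mu\circ S = \mu$ yields $\mu((Sa)^*Sa)\leq \mu(a^*a)$, so $S$ descends to a contraction $T_\mu$ on $H_\mu$; because $S\mathbbm{1}=\mathbbm{1}$, the unit vector $[\mathbbm{1}]$ is $T_\mu$-fixed. Let $P_\mu$ denote the mean ergodic projection of $T_\mu$, so that $P_\mu[\mathbbm{1}] = [\mathbbm{1}]$. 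The Cauchy--Schwarz inequality then gives
\begin{equation*}
 |\mu(x)|^2 = |([x]|[\mathbbm{1}])_\mu|^2 = |(P_\mu[x]|[\mathbbm{1}])_\mu|^2 \leq \|P_\mu[x]\|_{H_\mu}^2 = ([x]|P_\mu[x])_\mu,
\end{equation*}
and the mean ergodic theorem expands the right-hand side as $\lim_{J\to\infty}\frac{1}{J}\sum_{j=1}^J \mu((S^jx)^*x)$.

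Since this limit equals $\|P_\mu[x]\|_{H_\mu}^2 \in \mathbb{R}_{\geq 0}$, taking real parts is harmless, and from $\mu=\lim_i\nu_i$ each Ces\`aro term satisfies
\begin{equation*}
 \Re\mu((S^jx)^*x) = \lim_i \frac{1}{N_i}\sum_{n=0}^{N_i-1} \Re\langle S^n((S^jx)^*x),\mu_i\rangle \leq \limsup_i \frac{1}{N_i}\sum_{n=0}^{N_i-1}\Re\langle S^n((S^jx)^*x),\mu_i\rangle.
\end{equation*}
Averaging this inequality in $j$ and taking $\liminf_{J\to\infty}$ delivers the right-hand side of the theorem, while remembering that passing to the refined subnet can only decrease the inner $\limsup_i$ so the bound transfers back to the original net. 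The main (mild) obstacle is the verification that $S$ genuinely descends to a contraction on $H_\mu$: this is exactly what the Schwarz inequality ensures, and is the reason the theorem is stated for Markov--Schwarz rather than merely positive operators; everything else is a routine adaptation of the template established in \cref{vdc1} and \cref{vdchilbertvalued}.
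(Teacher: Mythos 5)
Your proposal is correct and follows essentially the same route as the paper's proof: pass to a subnet where the Ces\`aro-averaged states converge weak-$*$ to an invariant state $\mu$, perform the GNS construction for $\mu$, use the Schwarz inequality to descend $S$ to a contraction, and apply the mean ergodic theorem together with Cauchy--Schwarz. The only (harmless) cosmetic differences are that you insert $P_\mu$ via its self-adjointness and $P_\mu[\mathbbm{1}]=[\mathbbm{1}]$ rather than via the invariance of $\mu$ under the Ces\`aro averages, and that you spell out the invariance of $\mu$ which the paper declares to be clear.
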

	\begin{proof}
%	Consider the Banach space $\ell^\infty$ of all bounded complex sequences and take $u= (u_n)_{n \in \N} \in \ell^\infty$. We consider the shift operator $S \in \mathscr{L}(\ell^\infty)$ given by $S(v_n)_{n \in \N} \coloneqq (v_{n+1})_{n \in \N}$ and the functional $\delta_1 \in (\ell^\infty)'$ defined by $\delta_1 ((v_n)_{n \in \N}) \coloneqq v_1$ for $v = (v_n)_{n \in \N} \in \ell^\infty$. Setting $A_N \coloneqq \frac{1}{N} \sum_{n=0}^{N-1} S^n$ for $N \in \N$, we rewrite the left-hand side of the desired inequality as
%		\begin{align*}
%			\limsup_{N \rightarrow \infty} \left|\frac{1}{N} \sum_{n=1}^N u_n\right|^2 = \limsup_{N \rightarrow \infty} |\langle u,A_N'\delta_1 \rangle|^2.
%		\end{align*}
	For the sake of convenience we write $C_N \coloneqq \frac{1}{N} \sum_{n=0}^{N-1} S^n \in \mathscr{L}(A)$ for every $N \in \N$. Passing to a subnet of $(C_{N_i}'\mu_{N_i})_{i \in I}$ we may assume that 
		\begin{align*}
			\limsup_{i \in I} |\langle C_{N_i}x,\mu_{N_i} \rangle|^2 = \lim_{i \in I} |\langle C_{N_i}x,\mu_{N_i} \rangle|^2 = \limsup_{i \in I} |\langle x,C_{N_i}'\mu_{N_i} \rangle|^2
		\end{align*}
	Using compactness and convexity of the state space $\mathrm{S}(A)$, we may also assume that the weak* limit $\mu \coloneqq \lim_{i \in I} C_{N_i}'\mu_i\in \mathrm{S}(A)$ exists. Thus, we obtain
		\begin{align*}
			\limsup_{i} \left|\left\langle \frac{1}{N_i} \sum_{n=0}^{N_i-1} S^n x, \mu_i\right\rangle\right|^2 = |\langle x, \mu\rangle|^2.
		\end{align*}	
		It is clear that the state $\mu$ is invariant, i.e., $S'\mu = \mu$. As in the GNS-construction (see, e.g., \cite[Section 3.2]{Pede2018}) the map 
			\begin{align*}
				(\cdot |\cdot)_{\mu} \colon A \times A \rightarrow \C, \quad (y,z) \mapsto \langle z^*y , \mu \rangle
			\end{align*}
		is a positive sesquilinear form yielding a Hilbert space $\mathcal{H}_\mu$: Take the subspace $A_\mu \coloneqq \{y \in A\mid (y|y)_{\mu} = 0\}$ and the completion $\mathcal{H}_\mu$ of the quotient $A/A_\mu$ with respect to the induced norm $\|\cdot\|_\mu$ given by $\|[y]\|_\mu = (y|y)_{\mu}$ for $y \in A$. 
%		\footnote{With the identification $\ell^\infty(\N) \cong \mathrm{C}(\beta\N)$ and the Riesz-Markov representation theorem, $\mu$ can be seen as a regular Borel probability measure on the Stone-Cech compactification $\beta \N$ and $\mathcal{H}_\mu$ is then simply the corresponding space $\mathrm{L}^2(\beta \N,\mu)$. However, the concrete structure of the Hilbert space $\mathcal{H}_\mu$ is inessential for our proof.}
%		Using that $\mu$ is unital, we obtain
%			\begin{align*}
%				|\langle v,\mu\rangle|^2 \leq \langle |v|,\mu \rangle \leq \|v\|_\mu
%			\end{align*}
%		for every $v \in \ell^\infty$ by Cauchy--Schwarz. 
		Since $\mu$ is invariant and $S$ satisfies the Schwarz inequality, $S$ induces a contraction $S_\mu \in \mathscr{L}(\mathcal{H}_\mu)$ with $S_\mu [y] = [S y]$ for every $[y] \in A/A_\mu$. By the mean ergodic theorem the sequence $(\frac{1}{J}\sum_{j=1}^{J}(S_\mu)^j)_{J \in \N}$ converges strongly to the orthogonal projection $P_\mu  \in \mathscr{L}(\mathcal{H}_\mu)$ onto the fixed space $\fix(S_\mu)$ of the operator $S_\mu$ (see \cite[Theorem 8.6]{EFHN2015}). This yields
			\begin{align*}
				|\langle x,\mu\rangle| &= \lim_{J \rightarrow \infty} \left|\left\langle\frac{1}{J} \sum_{j=1}^{J} S^j x, \mu \right\rangle\right|  = \lim_{J \rightarrow \infty} \left|\left(\left[\frac{1}{J}\sum_{j=1}^{J}S^jx\right]\mmid[\mathbbm{1}]\right)_\mu\right| = |(P_\mu [x]| [\mathbbm{1}])_\mu|.
			\end{align*}
		Thus, by the Cauchy--Schwarz inequality and the fact that $\mu$ is unital, we obtain
			\begin{align*}
				|\langle x,\mu\rangle|^2 &\leq \|P_\mu [x]\|_\mu^2 \cdot \|[\mathbbm{1}]\|_{\mu}^2 = \|P_\mu [x]\|_\mu^2 = ( [x]|P_\mu[x])_{\mu} = \lim_{J \rightarrow \infty} \frac{1}{J}\sum_{j=1}^{J} (x|S^j x)_\mu \\
				&= \lim_{J \rightarrow \infty} \frac{1}{J}\sum_{j=1}^{J} \lim_{i \in I} (C_{N_i}'\mu_i)((S^jx)^* x) = \lim_{J \rightarrow \infty} \frac{1}{J}\sum_{j=1}^{J} \lim_{i \in I}\frac{1}{N_i} \sum_{n=0}^{N_i-1}\langle S^n((S^jx)^* x), \mu_i\rangle.
			\end{align*}
		Therefore,
			\begin{align*}
					|\langle x, \mu\rangle|^2 \leq \liminf_{J \rightarrow \infty} \frac{1}{J}\sum_{j=1}^{J} \limsup_{i}\frac{1}{N_i}  \sum_{n= 0}^{N_i -1}  \Re \langle S^n((S^jx)^* x),\mu_i\rangle.
			\end{align*}
\end{proof}
	Using the   reformulation   at the beginning of this section, we obtain the scalar van der Corput inequality \cref{vdc1} as a special case.
	\begin{corollary}[van der Corput for complex numbers]\label{vdcscalar}
		For every bounded sequence $(u_n)_{n \in \N}$ of complex numbers the inequality
		\begin{align*}
			\limsup_{N \rightarrow \infty} \left|\frac{1}{N} \sum_{n=1}^N u_n\right|^2 \leq \liminf_{J \rightarrow \infty} \frac{1}{J}\sum_{j=1}^J \limsup_{N \rightarrow \infty} \frac{1}{N}\sum_{n=1}^{N} \Re (u_n \overline{u_{n+j}})
		\end{align*}
		holds. 
	\end{corollary}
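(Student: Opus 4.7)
The plan is to derive this corollary as a direct specialization of Theorem \ref{vdc2} to the commutative setting, exactly as foreshadowed by the reformulation carried out immediately before that theorem. Since that reformulation has already rewritten both sides of the classical scalar van der Corput inequality in the abstract form that appears in \ref{vdc2}, the only remaining task is to assemble the correct data and check that the hypotheses of \ref{vdc2} are met.

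First I would set $A \defeq \ell^\infty(\N)$, which is a unital commutative $\uC^*$-algebra, and take $S \in \mathscr{L}(A)$ to be the forward shift $(v_n)_n \mapsto (v_{n+1})_n$. Since $S$ is a unital $^*$-endomorphism of $A$, it is in particular a Markov--Schwarz operator. Next I would pick $x \defeq (u_n)_n \in A$ together with the state $\mu \defeq \delta_1 \in \mathrm{S}(A)$ given by point evaluation at $1$, and apply Theorem \ref{vdc2} to the trivial net indexed by $I = \N$ with $N_i = i$ and $\mu_i = \delta_1$ for every $i \in I$.

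It then remains to identify the two sides of the inequality produced by Theorem \ref{vdc2} with the two sides of the corollary. This reduces to the elementary observations
\begin{equation*}
\langle S^n x, \delta_1\rangle = u_{n+1} \quad\text{and}\quad \langle S^n((S^j x)^* x), \delta_1\rangle = \overline{u_{n+j+1}}\, u_{n+1},
\end{equation*}
followed by a harmless shift of the summation index from $\{0,\dots,N-1\}$ to $\{1,\dots,N\}$. There is no genuine obstacle here: the entire argument is a dictionary check translating the $\uC^*$-algebraic expressions back into their classical scalar meaning, and the only item that even warrants explicit mention is that the shift on $\ell^\infty(\N)$ is a unital $^*$-homomorphism and hence Markov--Schwarz. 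Substituting these identifications into the conclusion of Theorem \ref{vdc2} yields precisely the inequality stated in the corollary.
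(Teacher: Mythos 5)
Your proposal is correct and matches the paper's own proof, which likewise applies \cref{vdc2} with $A=\ell^\infty(\N)$, the shift $S$, $x=(u_n)_n$, the constant sequence of states $\mu_i=\delta_1$, and $N_i=i$. The only difference is that you spell out the dictionary check (which the paper delegates to the reformulation preceding \cref{vdc2}), and your identifications are accurate.
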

	\begin{proof}
		Apply \cref{vdc2} with $A= \ell^\infty(\N)$, $S \in \mathscr{L}(A)$ the shift given by $S(v_n)_{n \in \N} \coloneqq (v_{n+1})_{n \in \N}$ for $(v_n)_{n \in \N} \in \ell^\infty(\N)$, $(N_i)_{i \in I} = (N)_{n \in \N}$, $\mu_i = \delta_1\colon \ell^\infty(\N) \rightarrow \C, \, (v_n)_{n \in \N} \mapsto v_1$  for every $i \in \N$ and $x = (u_n)_{n \in \N} \in \ell^\infty(\N)$.
	\end{proof}
	However, \cref{vdc2} can also be applied to non-commutative $\mathrm{C}^*$-algebras, e.g., the $\mathrm{C}^*$-algebra $\mathscr{L}(H)$ of all bounded operators on a Hilbert space $H$.
	\begin{corollary}[van der Corput for operators]\label{vdcop}
		Let $H$ be a Hilbert space, $S \in \mathscr{L}(H)$ an isometry and $\xi \in H$ with $\|\xi\| = 1$. Then the inequality
			\begin{align*}
				\limsup_{N \rightarrow \infty} \left| \frac{1}{N}\sum_{n=0}^{N-1} (TS^n\xi|S^n\xi)\right|^2 \leq \liminf_{J \rightarrow \infty} \frac{1}{J}\sum_{j=1}^J \limsup_{N \rightarrow \infty} \frac{1}{N}\sum_{n=0}^{N-1} \Re (S^jTS^n\xi|TS^{n+j}\xi).
			\end{align*}
		holds for every $T \in \mathscr{L}(H)$.
	\end{corollary}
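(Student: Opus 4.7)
The plan is to apply \cref{vdc2} to the non-commutative $\uC^*$-algebra $A \defeq \mathscr{L}(H)$ with the Markov--Schwarz operator $\Phi \in \mathscr{L}(A)$ defined by $\Phi(X) \defeq S^* X S$, the constant net of vector states $\mu_i \defeq \omega_\xi \in \mathrm{S}(A)$ given by $\omega_\xi(X) = (X\xi|\xi)$, the element $x \defeq T$, and the sequence $N_i \defeq i$ indexed by $I = \N$.

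First I would verify that $\Phi$ is Markov--Schwarz. Unitality $\Phi(\mathbbm{1}) = S^* S = \mathbbm{1}$ is immediate from the assumption that $S$ is an isometry. For the Schwarz inequality, I would use that $SS^* \leq \mathbbm{1}$ (being an orthogonal projection onto the range of $S$); consequently $X^* S S^* X \leq X^* X$, and conjugating by $S$ yields
\begin{equation*}
  \Phi(X)^* \Phi(X) = S^* X^* S S^* X S \leq S^* X^* X S = \Phi(X^* X).
\end{equation*}
Alternatively, $\Phi$ is patently completely positive, hence $2$-positive, and so Markov--Schwarz by the remark following the definition.

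Next I would translate both sides of the conclusion of \cref{vdc2} into the quantities appearing in the corollary. Iterating gives $\Phi^n(T) = S^{*n} T S^n$, so
\begin{equation*}
  \langle \Phi^n(T), \omega_\xi \rangle = (S^{*n} T S^n \xi \mid \xi) = (T S^n \xi \mid S^n \xi),
\end{equation*}
which reproduces the averages on the left-hand side of the claim. For the right-hand side, one computes $(\Phi^j T)^* T = S^{*j} T^* S^j T$, whence $\Phi^n((\Phi^j T)^* T) = S^{*(n+j)} T^* S^j T S^n$; pairing with $\omega_\xi$ produces
\begin{equation*}
  \langle \Phi^n((\Phi^j T)^* T), \omega_\xi \rangle = (S^j T S^n \xi \mid T S^{n+j} \xi),
\end{equation*}
which after taking real parts and averaging gives the right-hand side.

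With these identifications the desired inequality is literally the conclusion of \cref{vdc2}, so no further work is needed. I do not anticipate any genuine obstacle: the only substantive point is the Markov--Schwarz property of conjugation by an isometry, which is standard, and the rest is just bookkeeping to match the two expressions.
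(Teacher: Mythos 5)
Your proposal is correct and takes essentially the same route as the paper: apply \cref{vdc2} to the $\uC^*$-algebra $\mathscr{L}(H)$ with the implemented operator $X \mapsto S^*XS$, the constant net of vector states $\omega_\xi$, and $x = T$. You merely spell out the Markov--Schwarz verification and the identification of both sides, which the paper leaves implicit; both computations check out.
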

	\begin{proof}
		We consider the implemented operator $\mathcal{S}$ acting on $\mathscr{L}(H)$ via $\mathcal{S}T \coloneq S^*TS$ for $T \in \mathscr{L}(H)$. Since $S$ is an isometry, this is a Markov--Schwarz operator. Now take the vector state $\mu$ on $\mathscr{L}(H)$ given by $\mu(T) \coloneqq (T\xi|\xi)$ for all $T \in \mathscr{L}(H)$ and set $\mu_i \coloneqq \mu$ for all $i \in \N$. 
		\cref{vdc2} applied to $\mathcal{S}$ yields the desired inequality.
	\end{proof}
	
\section{Van der Corput on Hilbert modules} \label{sec:modules}
  Similarly to our first proof of the scalar van der Corput inequality, also
  the proof of the full inequality \cref{vdchilbertvalued} can be carried out in greater generality 
  on so-called Hilbert modules. As we will see in \cref{unif3}, being able to consider 
  Hilbert modules with other underlying algebras than $\ell^\infty(\N)$, 
  e.g.\ $\uC_\ub(\N\times\T)$, gives additional control that allows to establish 
  uniform versions of the van der Corput inequality. These are often used for
  establishing Wiener-Wintner results.
  
  \begin{definition}
			Let $A$ be a unital $\mathrm{C}^*$-algebra. A \emph{pre-Hilbert module} $E$ over $A$ is a   unital\footnote{A left module $E$ over $A$ is \emph{unital} if  $1 \cdot x = x$ for every $x \in E$.}   left module $E$ over the algebra $A$ together with a map 
				\begin{align*}
					(\cdot|\cdot)_A \colon E \rightarrow A, \quad (x,y) \mapsto (x|y)_A
				\end{align*}
			such that
				\begin{enumerate}[(i)]
					\item $(x|x)_A \geq 0$ and $(x|x)_A = 0$ if and only if $x = 0$ for $x \in E$.
					\item $(x|y)_A^* = (y|x)_A$ for all $x,y \in E$.
					\item $(\cdot|y)_A \colon E \rightarrow A$ is an $A$-linear map for every $y \in E$.
				\end{enumerate}
		\end{definition}
		An introduction to this concept can be found in \cite{Lanc1995}. 
		
			We will henceforth assume that $A$ is commutative\footnote{By the Gelfand-Naimark theorem one can (and should) think of $A = \mathrm{C}(K)$ for a compact space $K$.} since in this case the Cauchy--Schwarz inequality 
				\begin{align*}
					|(x|y)_A| \leq (x|x)_A^{\frac{1}{2}} \cdot (y|y)_A^{\frac{1}{2}}
				\end{align*}
			holds for all $x,y \in E$, where modulus and square root are defined via continuous functional calculus (use the Gelfand-Naimark representation of $A$ and apply the Cauchy-Schwarz inequality for semi-inner products pointwise, see \cite[page 49]{DuGi1983}). Setting 
				\begin{align*}
					|x|_A \coloneqq (x|x)_A^{\frac{1}{2}} \textrm{ for } x \in E,
				\end{align*}
			we then obtain a \enquote{vector-valued norm} $|\cdot| \colon E \rightarrow A_+$. This also yields a real-valued norm $\|\cdot\|$ via $\|x\|\coloneqq \||x|_A\|$ for $x \in E$. If $E$ is complete with respect to this norm, then $E$ is called a \emph{Hilbert module} (or a \emph{Hilbert $\mathrm{C}^*$-module}) over $A$. However, completeness is not needed in the following.
		\begin{example}\label{exmodule}
			Let $H$ be a Hilbert space. By defining multiplication componentwise we turn $\ell^\infty(\N, H)$ into a unitary module over $\ell^\infty(\N)$. Setting $(u|v)_{\ell^\infty(\N)}\coloneqq ((u_n|v_n))_{n \in \N}$ for $u=(u_n)_{n \in \N}, v= (v_n)_{n \in \N} \in \ell^\infty(\N, H)$, we arrive at a Hilbert module over $\ell^\infty(\N)$. 
		\end{example}
		For stating a van der Corput inequality on such (pre-)Hilbert modules we now need two operators: A Markov operator on $S$ on the $\mathrm{C}^*$-algebra $A$, and an operator $T \in \mathscr{L}(E)$ on the pre-Hilbert module $E$ which is dominated by $S$ in the following sense.
			\begin{definition}
				Let $E$ be a pre-Hilbert module over a commutative unital $\mathrm{C}^*$-algebra $A$ and $S \in \mathscr{L}(A)$ a Markov operator. A bounded operator $T \in \mathscr{L}(E)$ is \emph{$S$-dominated} if $|Tx|_A^2 \leq S|x|_A^2$ for all $x \in E$.
			\end{definition}
			\begin{example}\label{exdominated}
				Let $H$ be a Hilbert space and consider the Hilbert module $\ell^\infty(\N, H)$ over $\ell^\infty(\N)$. Then the shift $T \in \mathscr{L}(\ell^\infty(\N, H))$ defined by $T(u_n)_{n \in \N}\coloneqq (u_{n+1})_{n \in \N}$ for $(u_n)_{n \in \N} \in \ell^\infty(\N, H)$ is dominated by the shift $S \in \mathscr{L}(\ell^\infty(\N))$.
			\end{example}
		Using similar methods as in the previous section, we obtain the following version of the van der Corput inequality.
		\begin{theorem}\label{vdc3}
				Let $S \in \mathscr{L}(A)$ be a Markov operator on a commutative unital $\mathrm{C}^*$-algebra $A$ and let $T \in \mathscr{L}(E)$ be an $S$-dominated operator on a pre-Hilbert module $E$ over $A$.  Moreover, let $x \in E$, $(N_i)_{i \in I}$ a subnet of $\N$ and $\mu_i \in \mathrm{S}(A)$ a state on $A$ for every $i \in I$. Then
			\begin{align*}
				\limsup_{i} \left\langle \left|\frac{1}{N_i} \sum_{n=0}^{N_i-1} T^n x\right|_A^2, \mu_i\right\rangle \leq \liminf_{J \rightarrow \infty} \frac{1}{J}\sum_{j=1}^{J} \limsup_{i} \frac{1}{N_i} \sum_{n=0}^{N_i-1} \Re \langle S^n(x|T^jx)_A, \mu_i\rangle.
			\end{align*}
		\end{theorem}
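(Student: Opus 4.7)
The plan is to mimic the proof of \cref{vdchilbertvalued} with $\ell^\infty(\N, H)$ replaced by the pre-Hilbert module $E$, the Banach limit along $p$ replaced by the states $\mu_i$, and the ordinary GNS construction replaced by its module analogue. First I would pass to a subnet so the outer $\limsup$ becomes a $\lim$ and, by weak*-compactness of $\mathrm{S}(A)$, extract a further subnet so that $\mu \defeq \lim_i C_{N_i}'\mu_i$ exists in $\mathrm{S}(A)$, where $C_N \defeq \tfrac{1}{N}\sum_{n=0}^{N-1} S^n$; a standard telescoping argument as in the proof of \cref{vdc2} shows that $\mu$ is $S$-invariant. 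I would then carry out the module GNS construction with respect to the positive semidefinite sesquilinear form $(y|z)_\mu \defeq \langle (y|z)_A, \mu\rangle$ on $E$, quotienting by its kernel and completing to obtain a Hilbert space $\mathcal{H}_\mu$; the $S$-domination hypothesis combined with $S$-invariance of $\mu$ shows that $T$ descends to a contraction $T_\mu$ on $\mathcal{H}_\mu$.

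The heart of the argument, and the step I expect to be the main obstacle, is to construct an auxiliary sesquilinear form that plays the role of $B_\mu$ from the proof of \cref{vdchilbertvalued}. Concretely, I would set
\[
  B(y,z) \defeq \lim_{i} \langle (D_{N_i}y \mid D_{N_i}z)_A, \mu_i\rangle, \qquad D_N \defeq \tfrac{1}{N}\sum_{n=0}^{N-1} T^n,
\]
(after passing to yet another subnet to ensure existence), so that $B(x,x)$ equals the left-hand side of the desired inequality. Contractivity of $B$ with respect to $\|\cdot\|_\mu$ rests on the pointwise estimate $|D_{N_i}y|_A^2 \leq C_{N_i}|y|_A^2$ in $A_+$, which I would prove by combining the triangle inequality for the $A$-valued norm $|\cdot|_A$, the pointwise Cauchy--Schwarz inequality $\bigl(\tfrac{1}{N}\sum b_n\bigr)^2 \leq \tfrac{1}{N}\sum b_n^2$ for positive commuting elements of $A$, and iterated $S$-domination. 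Together with the module Cauchy--Schwarz inequality $|(D_{N_i}y|D_{N_i}z)_A| \leq |D_{N_i}y|_A \cdot |D_{N_i}z|_A$ (valid because $A$ is commutative) and the scalar Cauchy--Schwarz inequality under each state $\mu_i$, this yields
\[
  |B(y,z)|^2 \leq \lim_i \mu_i(C_{N_i}|y|_A^2)\cdot\lim_i \mu_i(C_{N_i}|z|_A^2) = (y|y)_\mu (z|z)_\mu,
\]
using $C_{N_i}'\mu_i \to \mu$ at the end, so $B$ descends to a contractive sesquilinear form $B_\mu$ on $\mathcal{H}_\mu$.

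Once $B_\mu$ is in hand, the remainder follows the scheme of \cref{vdchilbertvalued}. Since $D_{N_i}Ty - D_{N_i}y = \tfrac{1}{N_i}(T^{N_i}y - y)$ has norm $O(1/N_i)$, the form $B_\mu$ is two-sidedly shift-invariant: $B_\mu(T_\mu[y], T_\mu[z]) = B_\mu([y], [z])$. Averaging this invariance over $j = 1,\ldots, J$ and invoking the mean ergodic theorem for the contraction $T_\mu$ on $\mathcal{H}_\mu$ (so that $\tfrac{1}{J}\sum_{j=1}^J T_\mu^j[x] \to P_\mu[x]$) together with continuity of $B_\mu$ gives $B_\mu([x],[x]) = B_\mu(P_\mu[x], P_\mu[x]) \leq \|P_\mu[x]\|_\mu^2 = ([x]\mid P_\mu[x])_\mu$. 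Expanding this last inner product via $([x]\mid T_\mu^j[x])_\mu = \langle (x|T^j x)_A, \mu\rangle = \lim_i \tfrac{1}{N_i}\sum_{n=0}^{N_i-1} \langle S^n(x|T^j x)_A, \mu_i\rangle$ yields the desired upper bound; taking real parts is free since the left-hand side is real and nonnegative, and finally replacing the limits in $i$ and $J$ by $\limsup_i$ and $\liminf_J$ recovers the theorem as stated.
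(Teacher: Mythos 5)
Your proposal is correct and follows essentially the same route as the paper's proof: the form $B$ you construct is exactly the paper's $\varphi = \lim_i \varphi_i$, your pointwise estimate $|D_{N}y|_A^2 \leq C_{N}|y|_A^2$ (via the $A$-valued triangle inequality, pointwise Cauchy--Schwarz, and iterated $S$-domination) is the paper's key bound, and the GNS quotient, the descent of $T$ to a contraction $T_\mu$, and the mean ergodic theorem are applied identically. The only differences are organizational (you fix $\mu = \lim_i C_{N_i}'\mu_i$ before building $B$, and you spell out the $O(1/N_i)$ telescoping for the invariance of $B$, which the paper leaves implicit).
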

	
	\begin{proof}
		We take the Cesàro means $C_N \coloneqq \frac{1}{N} \sum_{n=0}^{N-1}T^n$ for every $N \in \N$. For each $i \in I$ we consider the positive semi-definite sesquilinear form
			\begin{align*}
				&\varphi_i \coloneqq \mu_i \circ (\cdot|\cdot)_A \circ  (C_{N_i} \times C_{N_i}) \colon E \times E \rightarrow \C, \\
				& (y,z)  \mapsto \left\langle \left(\frac{1}{N_i}\sum_{n=0}^{N_i-1}T^n y \mmid\frac{1}{N_i}\sum_{n=0}^{N_i-1}T^n z \right),\mu_i \right\rangle
			\end{align*}
		on $E$. As above, we may then assume that
			\begin{align*}
				\limsup_{i} \left\langle \left|\frac{1}{N_i} \sum_{n=0}^{N_i-1} T^n x\right|_A^2, \mu_i\right\rangle = \limsup_{i} \varphi_i(x,x) = \lim_{i} \varphi_i(x,x).
			\end{align*} 
		 Using that $T$ is $S$-dominated, we obtain that
			\begin{align*}
				|\varphi_i(y,y)| \leq \left\langle \left(\frac{1}{N_i}\sum_{n=0}^{N-1} |T^ny|_A\right)^2 , \mu_i \right\rangle \leq  \left\langle \frac{1}{N_i}\sum_{n=0}^{N-1}|T^ny|_A^2 , \mu_i \right\rangle \leq \left\langle |y|_A^2, \frac{1}{N_i}\sum_{n=0}^{N_i-1}(S^n)'\mu_i\right\rangle
			\end{align*}
		 for $y \in E$ and consequently
		 \begin{align}\label{ineq4}
		 	|\varphi_i(y,z)| \leq \left(\left\langle |y|_A^2, \frac{1}{N_i}\sum_{n=0}^{N_i-1}(S^n)'\mu_i\right\rangle\right)^{\frac{1}{2}}  \cdot \left(\left\langle |z|_A^2, \frac{1}{N_i}\sum_{n=0}^{N_i-1}(S^n)'\mu_i\right\rangle\right)^{\frac{1}{2}} 
		 \end{align}
		for all $y,z \in E$ and $i \in I$ by the Cauchy-Schwarz inequality. In particular, we obtain that $|\phi_i(y,z)| \leq \|y\| \cdot \|z\|$ for all $y,z \in E$ and $i \in I$. We may therefore assume (by passing to a subnet using Tychonoff's theorem) that there is a positive sesquilinear form $\varphi \colon E \times E \rightarrow \C$ with $\lim_{i} \varphi_i(y,z) = \varphi(y,z)$ for all $y,z \in E$. Thus the left-hand side of the desired inequality is given by $\varphi(x,x)$. 
		 Finally, again passing to a subnet, we can assume that (as in the proof of \cref{vdc2}) the weak*-limit $\mu \coloneqq \lim_{i} (\frac{1}{N_i}\sum_{n=0}^{N_i-1}(S^n)')\mu_i$ exists in the state space $\mathrm{S}(A)$. Inequality (\ref{ineq4}) then implies
			\begin{enumerate}[(i)]
				\item $|\varphi(y,z)| \leq (\langle |y|_A^2,\mu \rangle)^\frac{1}{2} \cdot (\langle |z|_A^2,\mu \rangle)^\frac{1}{2}$ for all $y,z \in E$.
			\end{enumerate}
		We also note that $\varphi$ is invariant, i.e.,
			\begin{enumerate}[(i)]
				\setcounter{enumi}{1}
				\item $\varphi \circ (T^n \times T^m) = \varphi$ for all $n,m \in \N_0$.
			\end{enumerate}
		As in the proof of \cref{vdc2} we now construct a contraction on a Hilbert space. To do so, consider the positive semi-definite sesquilinear form 
			\begin{align*}
				(\cdot |\cdot)_{\mu} \colon E \times E \rightarrow \C, \quad (y,z) \mapsto \langle (y|z)_A , \mu \rangle
			\end{align*}
		and the subspace $E_\mu \coloneqq \{x \in E\mid (x|x)_\mu = 0\}$. As in the discussion in \cref{sec:basic}, we write $\mathcal{H}_\mu$ for the Hilbert space constructed as the completion of the quotient $E/E_\mu$ with respect to the induced norm $\|\cdot\|_\mu$. Since $T$ is $S$-dominated, it induces a contraction $T_\mu \in \mathscr{L}(\mathcal{H}_\mu)$ with $T_\mu [y] = [Ty]$ for every $y \in E$. Denote the corresponding mean ergodic projection by $P_\mu \in \mathscr{L}(\mathcal{H}_\mu)$. Moreover, by (i) $\varphi$ defines a bounded positive semi-definite sesquilinear form $\varphi_\mu$ on $\mathcal{H}_\mu$. Using (i) and (ii) we now obtain
			\begin{align*}
				\varphi(x,x) &= \varphi_\mu(P_\mu[x],P_\mu[x]) \leq \|P_\mu[x]\|_\mu^2 
				= ([x],P_\mu[x])_\mu = \lim_{J \rightarrow \infty} \frac{1}{J}\sum_{j=1}^{J} (x|T^jx)_\mu \\
				&= \lim_{J \rightarrow \infty} \frac{1}{J}\sum_{j=1}^{J} \lim_i \frac{1}{N_i}\sum_{n=0}^{N_i-1}\langle S^n(x|T^jx)_A,\mu_i\rangle.
			\end{align*}
		This implies the claim.
	\end{proof}
	\begin{remark}
		\cref{vdc2} and \cref{vdc3} are related as follows: If $S \in \mathscr{L}(A)$ is a Markov operator on a commutative unital $\mathrm{C}^*$-algebra $A$, then, with $T=S$ and $(x|y)_A \coloneqq y^*x$ for $x,y \in A$ in \cref{vdc3}, the right-hand sides of the two inequalities coincide. While \cref{vdc2} gives an estimate of
			\begin{align*}
				\limsup_{i} \left|\left\langle \frac{1}{N_i} \sum_{n=0}^{N_i-1} S^n x, \mu_i\right\rangle\right|^2,
			\end{align*}
		\cref{vdc3} only provides an upper bound for
			\begin{align*}
				\limsup_{i} \left\langle \left(\frac{1}{N_i} \sum_{n=0}^{N_i-1} S^n x\right)^*\left(\frac{1}{N_i} \sum_{n=0}^{N_i-1} S^n x\right), \mu_i\right\rangle.
			\end{align*}
		Therefore, \cref{vdc3} is slightly weaker than \cref{vdc2} in the $\mathrm{C}^*$-algebra setting (however, if $\mu_i$ is multiplicative for every $i \in I$, then both estimates are the same).
	\end{remark}

%We demonstrate our method by first considering the case of $H= \C$, i.e., by proving a van der Corput inequality for sequences of complex numbers. 
%\begin{theorem}
%	Let $(K;\varphi)$ be a topological dynamical system, $(N_\alpha)_{\alpha \in A}$ a subnet of the integers and $\mu_\alpha \in \mathrm{C}(K)'$ a probability measure for every $\alpha \in A$. 
%\end{theorem}
%\begin{theorem}\label{vdc1}
%	For every bounded sequence $(u_n)_{n \in \N}$ of complex numbers the inequality
%		\begin{align*}
%			\limsup_{N \rightarrow \infty} \left|\frac{1}{N} \sum_{n=1}^N u_n\right|^2 \leq \liminf_{J \rightarrow \infty} \frac{1}{J}\sum_{j=1}^J \limsup_{N \rightarrow \infty} \frac{1}{N}\sum_{n=1}^{N} \Re u_n \overline{u_{n+j}}
%		\end{align*}
%	holds.
%\end{theorem}

\section{From operators to sequences}\label{applications}
%	The classical van der Corput lemma for scalar sequences is a direct consequence of \cref{vdc1}. In fact, we obtain the following version which is slightly stronger than the one commonly found in the literature.
%	\begin{corollary}[van der Corput for complex numbers]\label{vdcscalar}
%		For every bounded sequence $(u_n)_{n \in \N}$ of complex numbers the inequality
%		\begin{align*}
%			\limsup_{N \rightarrow \infty} \left|\frac{1}{N} \sum_{n=1}^N u_n\right|^2 \leq \liminf_{J \rightarrow \infty} \frac{1}{J}\sum_{j=1}^J \limsup_{N \rightarrow \infty} \frac{1}{N}\sum_{n=1}^{N} \Re u_n \overline{u_{n+j}}
%		\end{align*}
%		holds. 
%	\end{corollary}
%	\begin{proof}
%		Apply \cref{vdc1} with $A= \ell^\infty$, $S \in \mathscr{L}(A)$ the shift given by $S(v_n)_{n \in \N} \coloneqq (v_{n+1})_{n \in \N}$ for $(v_n)_{n \in \N} \in \ell^\infty$, $(N_i)_{i \in I} = (N)_{n \in \N}$, $\mu_i = \delta_1\colon \ell^\infty(\N) \rightarrow \C, \, (v_n)_{n \in \N} \mapsto v_1$  for every $i \in \N$ and $x = (u_n)_{n \in \N} \in \ell^\infty$.
%	\end{proof}
	We now apply our abstract operator theoretic inequality to derive more concrete versions of the van der Corput lemma. We start with the Hilbert space version of \cref{vdcscalar} which is a direct consequence of \cref{vdc3}.
	\begin{corollary}[van der Corput for Hilbert spaces]\label{vdchilbert}
		For every bounded sequence $(u_n)_{n \in \N}$ in a Hilbert space $H$ the inequality
		\begin{align*}
			\limsup_{N \rightarrow \infty} \left\|\frac{1}{N} \sum_{n=1}^N u_n\right\|^2 \leq \liminf_{J \rightarrow \infty} \frac{1}{J}\sum_{j=1}^J \limsup_{N \rightarrow \infty} \frac{1}{N}\sum_{n=1}^{N} \Re(u_n|u_{n+j})
		\end{align*}
		holds. 
	\end{corollary}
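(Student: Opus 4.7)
The plan is a direct application of \cref{vdc3} to the Hilbert module $E = \ell^\infty(\N, H)$ over $A = \ell^\infty(\N)$ from \cref{exmodule}. I would let $S \in \mathscr{L}(\ell^\infty(\N))$ and $T \in \mathscr{L}(\ell^\infty(\N, H))$ both be the forward shifts, so that $T$ is $S$-dominated by \cref{exdominated}. For the states I take $\mu_i = \delta_1$, the point evaluation at $1$ (which is a state on $\ell^\infty(\N)$), for every index, use the trivial net $(N_i)_{i \in I} = (N)_{N \in \N}$, and set $x = (u_n)_{n \in \N}$.

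What remains is to unwind both sides of the inequality produced by \cref{vdc3} and verify that they reproduce the statement of the corollary. On the left, the identity $(T^n x)_k = u_{n+k}$ shows that the $k$-th entry of the Cesàro average $\frac{1}{N}\sum_{n=0}^{N-1} T^n x$ equals $\frac{1}{N}\sum_{n=0}^{N-1} u_{n+k}$; since $|\cdot|_A^2$ acts entrywise as the squared Hilbert norm, evaluating at $\delta_1$ extracts the first coordinate and yields $\left\|\frac{1}{N}\sum_{n=1}^N u_n\right\|^2$. On the right, the $\ell^\infty(\N)$-valued inner product $(x|T^j x)_A$ is the scalar sequence $\left((u_k|u_{k+j})_H\right)_k$, so applying $S^n$ and evaluating at $1$ gives $(u_{n+1}|u_{n+1+j})_H$, and the standard index shift turns the inner Cesàro sum into $\frac{1}{N}\sum_{n=1}^N \Re(u_n|u_{n+j})$.

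No substantive obstacle is expected: the entire argument is pure specialization. All of the analytic content, namely the GNS construction on the Hilbert module, the production of an $S$-invariant state by Cesàro averaging weak*-limits of $(S^n)'\delta_1$, and the mean ergodic theorem for the induced contraction, has already been absorbed into \cref{vdc3}. The only care required is matching the shift indices correctly between the abstract and concrete formulations, which is routine bookkeeping and parallels the derivation of \cref{vdcscalar} from \cref{vdc2}.
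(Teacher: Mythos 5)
Your proposal is correct and coincides with the paper's own proof: the paper likewise specializes \cref{vdc3} to $A=\ell^\infty(\N)$, $E=\ell^\infty(\N,H)$, the two shifts $S$ and $T$, the constant net of states $\delta_1$, and $x=(u_n)_n$. The index bookkeeping you carry out explicitly (which the paper leaves implicit) is also accurate.
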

	\begin{proof}
		As in the proof of \cref{vdcscalar} we take $A = \ell^\infty(\N)$, $S \in \mathscr{L}(A)$, $(N_i)_{i \in I} = (N)_{N \in \N}$ and $(\mu_i)_{i \in I} = (\delta_1)_{n \in \N}$. Moreover,  we consider the Hilbert module $E = \ell^\infty(\N, H)$ over $\ell^\infty(\N)$ (see \cref{exmodule}) and let $T$ be the shift on $E$ which is $S$-dominated (see \cref{exdominated}). Then \cref{vdc3} yields the claim.
	\end{proof}
	By considering nets $(\mu_i)_{i \in I}$ of different states $\mu_i$ in \cref{vdc3} we also obtain the following   versions   of the van der Corput inequality (compare with \cite[Lemma 3.18]{KerrLi2016}, \cite[Exercise 20.8]{EFHN2015} and \cite[Proposition 6]{More2018}, respectively).
	\begin{corollary}\label{unif1}
		For every bounded sequence $(u_n)_{n \in \N}$ in a Hilbert space $H$ the inequality
		\begin{align*}
			\limsup_{N \rightarrow \infty} \sup_{M \in \N}\left\|\frac{1}{N} \sum_{n=M+1}^{M+N} u_n\right\|^2 \leq \liminf_{J \rightarrow \infty} \frac{1}{J}\sum_{j=1}^J \limsup_{N \rightarrow \infty} \sup_{M \in \N}\frac{1}{N}\sum_{n=M+1}^{N+M} \Re (u_n|u_{n+j})
		\end{align*}
		holds. 
	\end{corollary}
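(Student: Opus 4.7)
The plan is to apply \cref{vdc3} in the same Hilbert-module setup as in the proof of \cref{vdchilbert}, namely $A=\ell^\infty(\N)$, $E=\ell^\infty(\N,H)$ with the componentwise shifts $S$ and $T$, and $x=(u_n)_{n\in\N}\in E$, but to replace the single state $\delta_1$ by an $N$-dependent family of Dirac states $\mu_N\defeq\delta_{M_N+1}$ for suitably chosen indices $M_N\in\N$. The key observation is that point evaluation on $\ell^\infty(\N)$ acts as a sliding window onto translates of $(u_n)_n$: for any $M,N,j\in\N$,
\begin{align*}
    \langle|C_Nx|_A^2,\delta_{M+1}\rangle &= \left\|\frac{1}{N}\sum_{n=M+1}^{M+N}u_n\right\|^2, \\
    \frac{1}{N}\sum_{n=0}^{N-1}\Re\langle S^n(x|T^jx)_A,\delta_{M+1}\rangle &= \frac{1}{N}\sum_{n=M+1}^{M+N}\Re(u_n|u_{n+j})_H,
\end{align*}
so letting $M$ depend on $N$ is precisely the mechanism that encodes the uniform $\sup_M$ in the target inequality.

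Concretely, for each $N\in\N$ I would choose $M_N\in\N$ such that $\|\tfrac{1}{N}\sum_{n=M_N+1}^{M_N+N}u_n\|^2$ is within $\tfrac{1}{N}$ of $\sup_{M\in\N}\|\tfrac{1}{N}\sum_{n=M+1}^{M+N}u_n\|^2$. Applying \cref{vdc3} with $(N_i)_{i\in I}=(N)_{N\in\N}$ and $\mu_i\defeq\delta_{M_i+1}$ then produces, on the left, $\limsup_{N\to\infty}\|\tfrac{1}{N}\sum_{n=M_N+1}^{M_N+N}u_n\|^2$, which equals $\limsup_{N\to\infty}\sup_{M\in\N}\|\tfrac{1}{N}\sum_{n=M+1}^{M+N}u_n\|^2$ thanks to the near-optimal choice of $M_N$ together with the boundedness of $(u_n)_n$ (the $\tfrac{1}{N}$ slack washes out in the $\limsup$).

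On the right, the inner $N$-average supplied by \cref{vdc3} becomes $\tfrac{1}{N}\sum_{n=M_N+1}^{M_N+N}\Re(u_n|u_{n+j})_H$, which is trivially dominated by $\sup_{M\in\N}\tfrac{1}{N}\sum_{n=M+1}^{M+N}\Re(u_n|u_{n+j})_H$. This pointwise-in-$j$ estimate is preserved by $\limsup_N$ and by the outer Ces\`aro step $\liminf_J\tfrac{1}{J}\sum_{j=1}^J$, and chaining it with the inequality from \cref{vdc3} yields the desired bound. I do not anticipate a genuine obstacle: the only non-routine point is that the state $\mu_N$ varies with the averaging length $N$, but this is exactly the flexibility that \cref{vdc3} was formulated to provide via an arbitrary net of states $(\mu_i)_{i\in I}$ rather than a single invariant state.
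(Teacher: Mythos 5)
Your proposal is correct and follows essentially the same route as the paper: the paper likewise selects a sequence $(M_N)_{N\in\N}$ realizing the outer $\limsup\sup$, takes the Dirac states $\delta_{M_N}$ (your $\delta_{M_N+1}$ is the more carefully indexed choice) in the $\ell^\infty(\N,H)$-over-$\ell^\infty(\N)$ setup of \cref{vdchilbert}, and applies \cref{vdc3}. Your additional remarks on the near-optimal choice of $M_N$ and the domination of the right-hand side by the supremum over $M$ merely spell out what the paper leaves implicit.
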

	\begin{proof}
		We find a sequence $(M_N)_{N \in \N}$ with 
			\begin{align*}
				\limsup_{N \rightarrow \infty} \sup_{M \in \N}\left\|\frac{1}{N} \sum_{n=M+1}^{M+N} u_n\right\|^2 = \limsup_{N \rightarrow \infty} \left\|\frac{1}{N} \sum_{n=M_N+1}^{M_N+N} u_n\right\|^2.
			\end{align*}
		Consider the states $\mu_i \coloneqq \delta_{M_i}\colon \ell^\infty(\N) \rightarrow \C, \, (v_n)_{n \in \N} \mapsto v_{M_i}$ and proceed as in   the   proof of \cref{vdchilbert}. This yields the desired inequality. 
	\end{proof}
	\begin{corollary}\label{unif2}
		For every bounded sequence $(u_n)_{n \in \N}$ in a Hilbert space $H$ the inequality
		\begin{align*}
			\limsup_{N,M \rightarrow \infty} \left\|\frac{1}{N} \sum_{n=M+1}^{M+N} u_n\right\|^2 \leq \liminf_{J \rightarrow \infty} \frac{1}{J}\sum_{j=1}^J \limsup_{N,M \rightarrow \infty} \frac{1}{N}\sum_{n=M+1}^{N+M} \Re (u_n|u_{n+j})
		\end{align*}
		holds. 
	\end{corollary}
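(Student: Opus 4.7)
The plan is to apply \cref{vdc3} in the same Hilbert-module setup as in \cref{vdchilbert}, namely with $A = \ell^\infty(\N)$ and its shift $S$, the module $E = \ell^\infty(\N, H)$ with its $S$-dominated shift $T$ (see \cref{exdominated}), and $x = (u_n)_n \in E$. As in the proof of \cref{unif1}, the only change compared to \cref{vdchilbert} is the choice of the net of states on $A$; now this net must simultaneously track both indices $N$ and $M$.

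Concretely, we first select a sequence $(N_k, M_k)_{k \in \N}$ with $N_k, M_k \to \infty$ such that
\begin{equation*}
\limsup_{N, M \to \infty} \left\|\frac{1}{N}\sum_{n=M+1}^{M+N} u_n\right\|^2 = \lim_{k \to \infty} \left\|\frac{1}{N_k}\sum_{n=M_k+1}^{M_k+N_k} u_n\right\|^2,
\end{equation*}
and then invoke \cref{vdc3} with the subnet $(N_k)_{k \in \N}$ of $\N$ and the states $\mu_k \coloneqq \delta_{M_k+1} \in \mathrm{S}(A)$ given by point evaluation at $M_k+1$.

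With this choice, the left-hand side of the inequality provided by \cref{vdc3} evaluates to $\lim_k \|\frac{1}{N_k}\sum_{n=M_k+1}^{M_k+N_k} u_n\|^2$, which by construction equals the left-hand side of the corollary. Unraveling the action of $S^n$ and $\delta_{M_k+1}$ on $(x|T^j x)_A$, the right-hand side becomes $\liminf_J \frac{1}{J}\sum_{j=1}^J \limsup_k \frac{1}{N_k}\sum_{n=M_k+1}^{M_k+N_k}\Re(u_n|u_{n+j})$, and since $N_k, M_k \to \infty$, each inner $\limsup_k$ is bounded above by the corresponding double limsup as $N, M \to \infty$, giving the desired inequality. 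No serious obstacle is expected: the argument is essentially a bookkeeping exercise aligning the shift $S^n$, the point evaluations $\delta_{M_k+1}$, and the summation range $\{M+1, \ldots, M+N\}$; all the conceptual work has already been carried out in \cref{vdc3}.
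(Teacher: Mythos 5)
Your proposal is correct and follows essentially the same route as the paper: choose sequences $(N_k)_k$ and $(M_k)_k$ realizing the double limsup on the left-hand side, then apply \cref{vdc3} with the Hilbert module $\ell^\infty(\N,H)$ over $\ell^\infty(\N)$, the two shifts, and the point-evaluation states at $M_k+1$, and finally dominate each inner $\limsup_k$ by the double limsup. (Your index $\delta_{M_k+1}$ is in fact the more careful bookkeeping; the paper writes $\delta_{M_i}$, but the substance is identical.)
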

	\begin{proof}
		We find subsequences $(N_i)_{i \in \N}$ and $(M_i)_{i \in \N}$ of $\N$ such that 
			\begin{align*}
				\limsup_{N,M \rightarrow \infty} \left\|\frac{1}{N} \sum_{n=M+1}^{M+N} u_n\right\|^2  = \limsup_{i \rightarrow \infty} \left\|\frac{1}{N_i} \sum_{n=M_i+1}^{M_i+N_i} u_n\right\|^2.
			\end{align*}
		We then again proceed as in the proof of \cref{vdchilbert} and apply \cref{vdc3} with $(N_i)_{i \in \N}$ and $(\mu_i)_{i \in \N}$ given by $\mu_i \coloneqq \delta_{M_i}\colon \ell^\infty(\N) \rightarrow \C, \, (v_n)_{n \in \N} \mapsto v_{M_i}$ for $i \in \N$.
	\end{proof}
	\begin{corollary}\label{unif3}
		For every bounded sequence $(u_n)_{n \in \N}$ in a Hilbert space $H$ the inequality
		\begin{align*}
			\limsup_{N \rightarrow \infty} \sup_{|\lambda| = 1}\left\|\frac{1}{N} \sum_{n=1}^N \lambda^nu_n\right\|^2 \leq \liminf_{J \rightarrow \infty} \frac{1}{J}\sum_{j=1}^J \limsup_{N \rightarrow \infty} \left|\frac{1}{N}\sum_{n=1}^{N} (u_n|u_{n+j})\right|
		\end{align*}
		holds. 
	\end{corollary}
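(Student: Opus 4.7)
The plan is to apply \cref{vdc3} with an enlarged commutative $\uC^*$-algebra and pre-Hilbert module that simultaneously capture the time shift and a rotation parameter $\lambda \in \T$. I would take $A \defeq \ell^\infty(\N, \uC(\T))$ (a commutative unital $\uC^*$-algebra under pointwise operations and the sup norm), with Markov operator $S$ given by the usual shift $(Sa)(n, \lambda) \defeq a(n+1, \lambda)$. For the pre-Hilbert module over $A$, set $E \defeq \ell^\infty(\N, \uC(\T, H))$ with inner product $(f|g)_A(n, \lambda) \defeq (f_n(\lambda)|g_n(\lambda))_H$, and introduce the ``twisted shift'' $T \in \mathscr{L}(E)$ defined by $(Tf)_n(\lambda) \defeq \lambda f_{n+1}(\lambda)$. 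Since $|\lambda| = 1$, a direct pointwise computation yields $|Tf|_A^2 = S|f|_A^2$, so $T$ is $S$-dominated.

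Next, I would embed $(u_n)_n$ into $E$ as the constant-in-$\lambda$ element $x \in E$ given by $x_n(\lambda) \defeq u_n$. A direct induction gives $(T^k x)_n(\lambda) = \lambda^k u_{n+k}$, so that evaluating the Cesàro mean at $n=1$ yields
\begin{equation*}
  \left|\frac{1}{N}\sum_{k=0}^{N-1} T^k x\right|_A^2\!(1, \lambda)
  = \left\|\frac{1}{N}\sum_{m=1}^N \lambda^{m-1} u_m\right\|^2
  = \left\|\frac{1}{N}\sum_{m=1}^N \lambda^m u_m\right\|^2,
\end{equation*}
the last equality using $|\lambda|=1$. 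For each $N \in \N$, pick $\lambda_N \in \T$ attaining the supremum of this quantity over $\lambda \in \T$ (possible by compactness of $\T$ and continuity in $\lambda$), and define the states $\mu_N \in \mathrm{S}(A)$ by $\mu_N(f) \defeq f_1(\lambda_N)$. Applying \cref{vdc3} with $(N_i)_i = (N)_N$, the left-hand side becomes exactly $\limsup_N \sup_{|\lambda|=1} \|\frac{1}{N}\sum_{m=1}^N \lambda^m u_m\|^2$. On the right-hand side, the computation $(x|T^j x)_A(n, \lambda) = \bar\lambda^{\,j}(u_n|u_{n+j})_H$, combined with the elementary bound $\Re(\bar\lambda^{\,j} z) \leq |z|$, produces
\begin{equation*}
  \frac{1}{N}\sum_{n=0}^{N-1}\Re\langle S^n(x|T^j x)_A, \mu_N\rangle
  = \Re\!\left[\bar\lambda_N^{\,j}\cdot \frac{1}{N}\sum_{m=1}^N (u_m|u_{m+j})_H\right]
  \leq \left|\frac{1}{N}\sum_{m=1}^N (u_m|u_{m+j})_H\right|.
\end{equation*}
Passing to $\limsup_N$ and then to $\liminf_J \frac{1}{J}\sum_{j=1}^J$ delivers the right-hand side of the corollary.

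The main obstacle is choosing the right ambient algebra and module: one must enlarge $A$ beyond $\ell^\infty(\N)$ so that point-evaluation states can encode the rotation parameter $\lambda$, while ensuring that the twisted shift $T$ remains dominated, in the Hilbert module sense, by an honest shift $S$ on $A$. Once this setup is in place, the argument runs in mechanical parallel with the proof of \cref{vdchilbert}, with only the elementary estimate $\Re(\bar\lambda^{\,j} z) \leq |z|$ producing the modulus on the right-hand side of the corollary.
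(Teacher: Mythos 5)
Your proposal is correct and follows essentially the same route as the paper: the paper also works over $\uC_{\mathrm{b}}(\T\times\N)\cong\ell^\infty(\N,\uC(\T))$ with point-evaluation states $f\mapsto f(\lambda_N,1)$ and applies \cref{vdc3}. The only (cosmetic) difference is that the paper keeps the untwisted shift $T$ and puts the twist into the element, $x(\lambda,n)\defeq\lambda^n u_n$, whereas you twist the operator and keep $x$ constant in $\lambda$; the two setups are conjugate and yield the identical computation $\Re\bigl[\bar\lambda_N^{\,j}\tfrac{1}{N}\sum_n(u_n|u_{n+j})\bigr]\leq\bigl|\tfrac{1}{N}\sum_n(u_n|u_{n+j})\bigr|$.
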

	\begin{proof}
		We write $\T \coloneqq \{z \in \C\mid |z| = 1\}$, consider the $\mathrm{C}^*$-algebra $A \coloneqq \mathrm{C}_{\mathrm{b}}(\T \times \N)$ of bounded continuous functions on $\T \times \N$ and the shift operator $S \in \mathscr{L}(A)$ given by $Sf(\lambda,n) \mapsto f(\lambda,n+1)$ for $(\lambda,n) \in \T \times \N$ and $f \in \mathrm{C}_{\mathrm{b}}(\T \times \N)$. Similar to the proof of \cref{vdchilbert} the space of bounded vector-valued continuous functions $E\coloneqq \mathrm{C}_{\mathrm{b}}(\T \times \N,H)$ is canonically a Hilbert module over $A$. The shift $T \in \mathscr{L}(E)$ given by $Ty(\lambda,n) \coloneqq y(\lambda, n+1)$ for $(\lambda,n) \in \T \times \N$ and $y \in \mathrm{C}_{\mathrm{b}}(\T \times \N)$ is $S$-dominated. We set $x(\lambda,n) \coloneqq \lambda^n u_n$ for $(\lambda,n) \in \T \times \N$. Finally, we pick a sequence $(\lambda_N)_{N \in \N}$ in $\T$ with
			\begin{align*}
				\limsup_{N \rightarrow \infty} \sup_{|\lambda| = 1}\left\|\frac{1}{N} \sum_{n=1}^N \lambda^nu_n\right\|^2 = \limsup_{N \rightarrow \infty} \left\|\frac{1}{N} \sum_{n=1}^N \lambda_N^nu_n\right\|^2
			\end{align*}
		and set $\mu_i(f) \coloneqq f(\lambda_i,1)$ for $f \in \mathrm{C}_{\mathrm{b}}(\T \times \N)$ and $i \in \N$. Then by  \cref{vdc3} 
			\begin{align*}
				\limsup_{N \rightarrow \infty} \sup_{|\lambda| = 1}\left\|\frac{1}{N} \sum_{n=1}^N \lambda^nu_n\right\|^2 &= \limsup_{N \rightarrow \infty} \left\langle \left|\frac{1}{N} \sum_{n=0}^{N-1}T^nx\right|_A^2, \mu_N \right\rangle \\
				&\leq \liminf_{J \rightarrow \infty} \frac{1}{J}\sum_{j=1}^{J} \limsup_{N \rightarrow \infty} \frac{1}{N} \sum_{n=0}^{N-1} \Re \langle S^n(x|T^jx)_A, \mu_N\rangle. 
			\end{align*}
		However, for every $j \in \N$, we have
			\begin{align*}
				 \Re \frac{1}{N} \sum_{n=0}^{N-1} \langle S^n(x|T^jx)_A, \mu_N\rangle& \leq \left| \frac{1}{N} \sum_{n=0}^{N-1} (\lambda_N^nu_n|\lambda_N^{n+j}u_{n+j})\right| = \left|\frac{1}{N} \sum_{n=0}^{N-1} (u_n|u_{n+j})\right|
			\end{align*}
		which yields the claim.
	\end{proof}

\section{Generalizations to F{\o}lner nets}
	Our results can easily be generalized from Cesàro means to other ergodic nets. Recall the following definition.
	\begin{definition}
		Let $\EuScript{S}$ be a semigroup (with composition now denoted multiplicatively). A net $(F_i)_{i \in I}$ of non-empty finite subsets of $\EuScript{S}$ is a \emph{right F{\o}lner net} if
			\begin{align*}
				\lim_i \frac{|F_is \Delta F_i|}{|F_i|} = 0
			\end{align*}
		for every $s \in \EuScript{S}$.
	\end{definition}
		\begin{remark}
		 	Note that every abelian semigroup has a (right) F{\o}lner net (see \cite[Theorem 4]{ArWi1967}) and every semigroup having a right F{\o}lner net is necessarily \emph{right amenable}, i.e., has a right invariant mean (however, the converse does not hold, see \cite[Section 4.22]{Pate1988}). 
		\end{remark}
		If the semigroup $\EuScript{S}$ is \emph{right cancellative} (see, e.g., \cite[Definition 1.16]{BeJuMi1989}), i.e., $rt= st$ implies $r=s$ for $r,s,t  \in \EuScript{S}$, then we readily obtain that right F{\o}lner nets induce \enquote{right ergodic operators nets} in the following way (cf. \cite[Definition  1.1]{Schr2013}).
		\begin{lemma}
			Let $(F_i)_{i \in I}$ be a right F{\o}lner net in a right cancellative semigroup $\EuScript{S}$. If $S \colon\EuScript{S} \rightarrow \mathscr{L}(E)$ is a bounded semigroup representation on a Banach space $E$, then the operators $C_i \coloneqq \frac{1}{|F_i|}\sum_{t \in F_i} S_t \in \mathscr{L}(E)$ for $i \in I$ satisfy
				\begin{align*}
					\lim_{i} \|C_iS_t - C_i\| = 0 
				\end{align*}			
			for every $t \in \EuScript{S}$.	 
		\end{lemma}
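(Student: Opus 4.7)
The plan is to compute $C_i S_t$ explicitly using the representation property and reduce the norm difference to a sum indexed by the symmetric difference $F_i t \,\Delta\, F_i$, at which point the F{\o}lner hypothesis finishes the argument.

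First I would use that $S$ is a semigroup homomorphism to write
\[
C_i S_t = \frac{1}{|F_i|} \sum_{s \in F_i} S_s S_t = \frac{1}{|F_i|} \sum_{s \in F_i} S_{st}.
\]
Since $\EuScript{S}$ is right cancellative, the map $s \mapsto st$ is injective, so $|F_i t| = |F_i|$ and the right-hand side can be rewritten as $\frac{1}{|F_i|}\sum_{r \in F_i t} S_r$. Subtracting $C_i = \frac{1}{|F_i|}\sum_{r \in F_i} S_r$, the terms indexed by $F_i \cap F_i t$ cancel, leaving
\[
C_i S_t - C_i = \frac{1}{|F_i|}\Bigl(\sum_{r \in F_i t \setminus F_i} S_r - \sum_{r \in F_i \setminus F_i t} S_r\Bigr).
\]

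Setting $M \coloneqq \sup_{s \in \EuScript{S}} \|S_s\|$, which is finite by the boundedness of the representation, the triangle inequality yields
\[
\|C_i S_t - C_i\| \leq M \cdot \frac{|F_i t \,\Delta\, F_i|}{|F_i|},
\]
and the right-hand side tends to zero along $I$ by the F{\o}lner property of $(F_i)_{i \in I}$.

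There is no serious obstacle here; the only point requiring attention is invoking right cancellativity at the reindexing step so that $F_i t$ has exactly $|F_i|$ distinct elements and the comparison with $F_i$ via symmetric difference is meaningful. Without cancellativity one would have to track multiplicities in the sum over $F_i t$, and the clean symmetric-difference estimate would fail.
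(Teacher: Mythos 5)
Your proof is correct and is exactly the routine argument the paper has in mind (the lemma is stated without proof precisely because this reindexing-plus-symmetric-difference estimate is standard): right cancellativity makes $s \mapsto st$ injective so that $C_iS_t = \frac{1}{|F_i|}\sum_{r \in F_it} S_r$, and the difference with $C_i$ is bounded by $M\,|F_it \,\Delta\, F_i|/|F_i| \to 0$. No gaps.
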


	Using the same arguments---mutatis mutandis---as in Sections 2 and 3 with the more general version of the mean ergodic theorem for right ergodic operator nets, see \cite[Theorem 8.32]{EFHN2015} and \cite[Theorem 1.7]{Schr2013}, we obtain the following analogues of \cref{vdc2} and \cref{vdc3}.
			\begin{theorem}\label{vdc2b}
		Let $\EuScript{S}$ be a right cancellative semigroup. Moreover, let
			\begin{enumerate}[(i)]
				\item $(F_i)_{i \in I}$ and $(G_j)_{j \in J}$ be right F{\o}lner nets for $\EuScript{S}$,
				\item $A$ be a unital $\mathrm{C}^*$-algebra,
				\item $S \colon \EuScript{S} \rightarrow \mathscr{L}(A)$ a representation as Markov--Schwarz operators, and
				\item $\mu_i \in \mathrm{S}(A)$ be a state for every $i \in I$.  
			\end{enumerate}
		Then
			\begin{align*}
				\limsup_{i} \left|\left\langle \frac{1}{|F_i|} \sum_{t \in F_i} S_t x, \mu_i\right\rangle\right|^2 \leq \liminf_{j} \frac{1}{|G_j|}\sum_{s \in G_j} \limsup_{i} \frac{1}{|F_i|} \sum_{t \in F_i} \Re \langle  S_t((S_sx)^* x) , \mu_i\rangle
			\end{align*}
		for every $x \in A$.
	\end{theorem}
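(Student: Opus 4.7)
The plan is to mimic the proof of \cref{vdc2} mutatis mutandis, replacing Ces\`aro means by averages along the F{\o}lner nets and replacing the standard mean ergodic theorem by its version for right ergodic operator nets. Write $C_i \defeq \frac{1}{|F_i|}\sum_{t \in F_i} S_t$ and $D_j \defeq \frac{1}{|G_j|}\sum_{s \in G_j} S_s$. First, I pass to a subnet of $(C_i' \mu_i)_{i \in I}$ along which the limsup on the left-hand side is attained as a limit, and, using weak*-compactness of $\mathrm{S}(A)$, extract a further subnet so that $\mu \defeq \lim_i C_i' \mu_i$ exists in $\mathrm{S}(A)$. Thus the left-hand side equals $|\langle x, \mu\rangle|^2$.

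The first nontrivial point is the invariance of $\mu$ under every $S_s'$. This is where the F{\o}lner assumption enters via the lemma preceding the statement: for each $s \in \EuScript{S}$ one has $\|C_i S_s - C_i\| \to 0$, hence
\begin{equation*}
  S_s' \mu = \lim_i S_s' C_i' \mu_i = \lim_i (C_i S_s)' \mu_i = \lim_i C_i' \mu_i = \mu.
\end{equation*}
Given this, the GNS construction on the $\uC^*$-algebra $A$ applied to $\mu$ yields the Hilbert space $\mathcal{H}_\mu$, and because each $S_s$ is Markov--Schwarz and $\mu$ is $S_s$-invariant, $S_s$ induces a contraction $(S_s)_\mu$ on $\mathcal{H}_\mu$ with $(S_s)_\mu [y] = [S_s y]$. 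This produces a bounded representation of $\EuScript{S}$ by contractions on $\mathcal{H}_\mu$, so the mean ergodic theorem for right ergodic operator nets (\cite[Theorem 8.32]{EFHN2015}) applies: the net $(\frac{1}{|G_j|}\sum_{s \in G_j} (S_s)_\mu)_{j \in J}$ converges strongly to the orthogonal projection $P_\mu$ onto the common fixed space of the $(S_s)_\mu$.

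With this in hand, the remainder of the proof is an exact transcription of the Cauchy--Schwarz calculation in \cref{vdc2}. Using $S_s'\mu = \mu$ and $(\mathbbm{1} | \mathbbm{1})_\mu = 1$, I compute
\begin{equation*}
  |\langle x, \mu\rangle| = \lim_j \left|\left\langle \frac{1}{|G_j|}\sum_{s \in G_j} S_s x, \mu\right\rangle\right| = |(P_\mu [x]\mid [\mathbbm{1}])_\mu|,
\end{equation*}
so by Cauchy--Schwarz $|\langle x, \mu\rangle|^2 \leq \|P_\mu [x]\|_\mu^2 = ([x]\mid P_\mu [x])_\mu$, and this in turn equals
\begin{equation*}
  \lim_j \frac{1}{|G_j|}\sum_{s \in G_j} (x\mid S_s x)_\mu = \lim_j \frac{1}{|G_j|}\sum_{s \in G_j} \lim_i \frac{1}{|F_i|}\sum_{t \in F_i} \langle S_t((S_s x)^* x),\mu_i\rangle,
\end{equation*}
using the definition $\mu = \lim_i C_i'\mu_i$. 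Passing to real parts and estimating the inner $\lim_i$ by $\limsup_i$ and the outer $\lim_j$ by $\liminf_j$ gives the claimed inequality.

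The main obstacle I expect is the invariance step $S_s'\mu = \mu$, because in the Ces\`aro setting this follows from the telescoping identity $C_N S - C_N = \frac{1}{N}(S^N - \mathrm{id})$, whereas here one needs the cited F{\o}lner lemma and, in particular, the right cancellativity assumption. Everything else is a direct book-keeping translation from $\N$ to $\EuScript{S}$ and does not require new ideas.
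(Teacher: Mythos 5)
Your proposal is correct and follows exactly the route the paper intends: the paper proves \cref{vdc2b} only by declaring it ``mutatis mutandis'' from \cref{vdc2} together with the F{\o}lner lemma and the mean ergodic theorem for right ergodic operator nets, and your write-up carries out precisely that translation. You also correctly isolate the only step that genuinely changes --- the invariance $S_s'\mu=\mu$, which now rests on $\lim_i\|C_iS_s-C_i\|=0$ (and hence on right cancellativity) rather than on telescoping --- so there is nothing to add.
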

	\begin{theorem}\label{vdc3b}
			Let $\EuScript{S}$ be a right cancellative semigroup. Moreover, let
				\begin{enumerate}[(i)]
				\item $(F_i)_{i \in I}$ and $(G_j)_{j \in J}$ be right F{\o}lner nets for $\EuScript{S}$,
				\item $E$ be a pre-Hilbert module over a unital commutative $\mathrm{C}^*$-algebra $A$,
				\item $S \colon \EuScript{S} \rightarrow \mathscr{L}(A)$ a representation as Markov operators,
				\item $T \colon \EuScript{S} \rightarrow \mathscr{L}(A)$ a representation such that $T_t$ is $S_t$-dominated for every $t \in \EuScript{S}$, and
				\item $\mu_i \in \mathrm{S}(A)$ a state for every $i \in I$.  
			\end{enumerate}
		Then
			\begin{align*}
				\limsup_{i} \left\langle \left|\frac{1}{|F_i|} \sum_{t \in F_i} T_t x\right|_A^2, \mu_i\right\rangle \leq \liminf_{j} \frac{1}{|G_j|}\sum_{s \in G_j} \limsup_{i} \frac{1}{|F_i|} \sum_{t \in F_i} \Re \langle  S_t(x|T_sx)_A , \mu_i\rangle
			\end{align*}
		for every $x \in E$.
		\end{theorem}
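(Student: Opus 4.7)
The overall plan is to copy the proof strategy of \cref{vdc3} verbatim, with the Cesàro means along $\N$ replaced everywhere by the Følner averages along $(F_i)$ and $(G_j)$. The main technical change is that the mean ergodic theorem for $\N$ is replaced by its version for right ergodic operator nets as stated in \cite[Theorem 8.32]{EFHN2015} and \cite[Theorem 1.7]{Schr2013}, and that translation-invariance of $\mu$ (and of the limit sesquilinear form) is no longer automatic but must be deduced from the Følner property.

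First, I introduce the averaged operators $C_i \defeq \frac{1}{|F_i|} \sum_{t \in F_i} T_t \in \mathscr{L}(E)$ and form the positive semi-definite sesquilinear forms
\begin{equation*}
  \varphi_i \colon E \times E \to \C, \quad (y,z) \mapsto \langle (C_i y \,|\, C_i z)_A, \mu_i\rangle.
\end{equation*}
By successive subnet passages (first in $(\varphi_i)$ evaluated at $(x,x)$, then in all pairs $(y,z)$ using Tychonoff combined with the uniform bound
$|\varphi_i(y,z)| \leq \|y\|\cdot\|z\|$ obtained from $S$-domination and the Cauchy--Schwarz inequality in the Hilbert module, and finally in the net of states $\nu_i \defeq \frac{1}{|F_i|}\sum_{t \in F_i}(S_t)'\mu_i \in \mathrm{S}(A)$ using weak* compactness) I may assume that $\varphi_i \to \varphi$ pointwise and $\nu_i \to \mu$ in the weak* topology, so that the left-hand side of the asserted inequality equals $\varphi(x,x)$ and the limit estimate
$|\varphi(y,z)| \leq \langle |y|_A^2,\mu\rangle^{1/2}\langle |z|_A^2,\mu\rangle^{1/2}$
holds.

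Next, I would verify invariance using the Følner property: for every $s \in \EuScript{S}$, the symmetric difference estimate $|F_is \Delta F_i|/|F_i| \to 0$ together with right cancellativity gives $(S_s)'\mu = \mu$, and analogously $\varphi \circ (T_s \times \mathrm{id}) = \varphi = \varphi \circ (\mathrm{id} \times T_s)$. With the invariant state $\mu$ in hand, I construct $\mathcal{H}_\mu$ as in \cref{sec:basic} from the form $(y|z)_\mu \defeq \langle (y|z)_A, \mu\rangle$. The $S$-domination $|T_tx|_A^2 \leq S_t|x|_A^2$ paired with invariance of $\mu$ yields $\|[T_t x]\|_\mu^2 \leq \langle |x|_A^2, (S_t)'\mu\rangle = \|[x]\|_\mu^2$, so each $T_t$ descends to a contraction $T_{t,\mu} \in \mathscr{L}(\mathcal{H}_\mu)$, and the multiplicativity $T_{st} = T_sT_t$ is preserved, giving a bounded representation of $\EuScript{S}$.

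Finally, the mean ergodic theorem for right ergodic operator nets applied to the net $(G_j)$ and the representation $t \mapsto T_{t,\mu}$ gives strong convergence of $\frac{1}{|G_j|}\sum_{s \in G_j} T_{s,\mu}$ to the orthogonal projection $P_\mu$ onto the fixed space of $\{T_{t,\mu}\mid t \in \EuScript{S}\}$. Exactly as in the proof of \cref{vdc3}, the two-sided invariance of $\varphi$ shows $\varphi(x,x) = \varphi_\mu(P_\mu[x], P_\mu[x])$, where $\varphi_\mu$ is the bounded sesquilinear form on $\mathcal{H}_\mu$ induced by $\varphi$; then Cauchy--Schwarz together with $\varphi_\mu(y,y) \leq \|y\|_\mu^2$ gives
\begin{equation*}
  \varphi(x,x) \leq \|P_\mu[x]\|_\mu^2 = ([x] \,|\, P_\mu[x])_\mu = \lim_{j} \frac{1}{|G_j|}\sum_{s \in G_j} (x|T_s x)_\mu,
\end{equation*}
and unfolding the definition of $(\cdot|\cdot)_\mu$ and of $\mu$ via $\nu_i$ yields the right-hand side after passing the real part and $\liminf_j$. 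The step I expect to be the most delicate is the invariance verification $\varphi \circ (T_s \times \mathrm{id}) = \varphi$: one has to show that the Følner condition on $(F_i)$ kills the boundary contribution even though $T_s$ does not necessarily preserve $(\cdot|\cdot)_A$ exactly, which is where right cancellativity of $\EuScript{S}$ and the uniform bound $\sup_i \|C_i T_s - C_i\|_{\mathscr{L}(E)} \to 0$ enter in an essential way.
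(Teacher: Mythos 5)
Your proposal is correct and is exactly the argument the paper intends: the paper gives no separate proof of \cref{vdc3b} but declares it to follow ``mutatis mutandis'' from \cref{vdc3} using the mean ergodic theorem for right ergodic operator nets, which is precisely the adaptation you carry out. Your extra care with the invariance step (deducing $(S_s)'\mu = \mu$ and the two-sided invariance of $\varphi$ from the right F{\o}lner condition, right cancellativity, and $\lim_i \|C_i T_s - C_i\| = 0$) is exactly the content of the paper's preparatory lemma on right ergodic operator nets, so nothing is missing.
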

	As in Section \ref{applications} these results imply several van   der   Corput inequalities, e.g., the following one.
	\begin{corollary}[van der Corput for semigroups]\label{vdcsgrhilbert}
		Let $\EuScript{S}$ be a right cancellative semigroup with right F{\o}lner nets $(F_i)_{i \in I}$ and $(G_j)_{j \in J}$. For every bounded map $u\colon \EuScript{S} \rightarrow H ,\, t \mapsto u_t $ into a Hilbert space $H$   and every $r \in \EuScript{S}$   the inequality 
			\begin{align*}
				\limsup_{i} \left\|\frac{1}{|F_i|}\sum_{t \in F_i} u_{rt}\right\|^2 \leq \liminf_{j} \frac{1}{|G_j|}\sum_{s \in G_j} \limsup_{i} \frac{1}{|F_i|}\sum_{t \in F_i} \Re(u_{rt}| u_{rts} )
			\end{align*}
		holds.
	\end{corollary}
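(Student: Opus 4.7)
The plan is to deduce this corollary from \cref{vdc3b} by making the same choices that turned \cref{vdc3} into \cref{vdchilbert}, now adapted to the semigroup setting. First I would take $A \defeq \ell^\infty(\EuScript{S})$ and, in direct analogy with \cref{exmodule}, the pre-Hilbert module $E \defeq \ell^\infty(\EuScript{S}, H)$ over $A$ equipped with the $A$-valued inner product $(u|v)_A(q) \defeq (u(q)|v(q))_H$. Next, I would define the right-translation representations $S \colon \EuScript{S} \to \mathscr{L}(A)$ and $T \colon \EuScript{S} \to \mathscr{L}(E)$ by $S_s f(q) \defeq f(qs)$ and $T_s u(q) \defeq u(qs)$. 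A short computation shows that $S_s S_t = S_{st}$ (and likewise for $T$), so these are semigroup homomorphisms; each $S_s$ is clearly unital and positive, hence Markov; and $T_s$ is $S_s$-dominated with equality, since $|T_s u|_A^2(q) = \|u(qs)\|_H^2 = (S_s |u|_A^2)(q)$.

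The second step is to fix $r \in \EuScript{S}$ and take the constant net of states $\mu_i \defeq \delta_r \in \mathrm{S}(A)$ given by point evaluation at $r$, i.e.\ $\delta_r(f) \defeq f(r)$, for every $i \in I$. Setting $x \defeq u \in E$ in \cref{vdc3b}, the left-hand side evaluates to $\limsup_i \bigl\|\tfrac{1}{|F_i|}\sum_{t \in F_i} u_{rt}\bigr\|_H^2$, since evaluation at $r$ commutes with the finite average and with the norm squared on $H$. For the right-hand side, note that $(x|T_s x)_A(q) = (u_q|u_{qs})_H$ and that $S_t$ precomposes with right translation by $t$, so $\langle S_t(x|T_s x)_A, \delta_r\rangle = (u_{rt}|u_{rts})_H$. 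Substituting yields exactly the right-hand side of the claimed inequality.

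There is no genuine obstacle here: the entire content of the statement is already packaged into \cref{vdc3b}, together with the routine verification that right translations define a Markov--Schwarz representation on $\ell^\infty(\EuScript{S})$ dominating the corresponding representation on $\ell^\infty(\EuScript{S}, H)$. The parameter $r \in \EuScript{S}$ enters only through the choice of the point-mass state $\delta_r$, which is why the resulting inequality holds uniformly for arbitrary $r$.
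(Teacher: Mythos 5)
Your proposal is correct and follows exactly the route the paper intends (the paper only remarks that \cref{vdc3b} implies this ``as in Section 5'' without writing out the details): you instantiate \cref{vdc3b} with $A = \ell^\infty(\EuScript{S})$, the Hilbert module $E = \ell^\infty(\EuScript{S},H)$, the right-translation representations, and the constant net of point-evaluation states $\delta_r$, precisely mirroring how \cref{vdchilbert} was deduced from \cref{vdc3}. The verifications you record (that $S_sS_t = S_{st}$ thanks to the chosen convention, that each $S_s$ is Markov, and that $T_s$ is $S_s$-dominated) are the right ones and all check out.
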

	\begin{remark}
		 It would be interesting to also apply   our approach to closed subsemigroups $\mathcal{S}$ of a locally compact group $\mathcal{G}$ with   right   Haar measure $m$  having a \emph{topological right F{\o}lner net} $(F_i)_{i \in I}$ (e.g., $\mathcal{S} = \R_{\geq 0}$ and $\mathcal{G} = \R$), i.e., $F_i \subset \mathcal{S}$ is compact with positive measure for every $i \in I$ and
			\begin{align*}
				\lim_i \frac{m(F_i \Delta  F_is)}{m(F_i)} = 0.
			\end{align*}
		for every $s \in \mathcal{S}$ (cf. \cite[Examples 1.2 (e)]{Schr2013}). In particular, this could lead to a new proof of  \cite[Theorem 2.12]{BeMo2016}.
	\end{remark}

% \parindent 0pt
% \parskip 0.5\baselineskip
% \setlength{\footskip}{4ex}
% \bibliographystyle{alpha}
% \bibliography{bibliography} 
% \footnotesize

\printbibliography

@Book{DuGi1983,
  Title                    = {Banach Bundles, Banach Modules and Automorphisms of C*-Algebras},
  Author                   = {M. J. Dupré and R. M. Gillette},
  Publisher                = {Longman},
  Year                     = {1983}
}

@Book{EFHN2015,
  Title                    = {Operator Theoretic Aspects of Ergodic Theory},
  Author                   = {Eisner, Tanja and Farkas, B\'{a}lint and Haase, Markus and Nagel, Rainer},
  Publisher                = {Springer},
  Year                     = {2015},
}

@Book{HoKr2018,
  Title                    = {Nilpotent Structures in Ergodic Theory},
  Author                   = {B. Host and B. Kra},
  Publisher                = {American Mathematical Society},
  Year                     = {2018}
}

@Book{Lanc1995,
  Title                    = {Hilbert C*-Modules},
  Author                   = {C. E. Lance},
  Publisher                = {Cambridge University Press},
  Year                     = {1995}
}

@article{MRR2019,
	title = "A proof of a sumset conjecture of Erd\H{o}s",
	pages = "605--652",
	year = "2019",
	author = "J. Moreira and F.K. Richter and D. Robertson",
	journal = "Ann. of Math. (2)",
	volume = "189",
}

@Book{BeJuMi1989,
  Title                    = {Analysis on Semigroups. Function Spaces, Compactifications, Representations},
  Author                   = {J. F. Berglund and H. Junghenn and P. Milnes},
  Publisher                = {Wiley},
  Year                     = {1989}
}

@article{Furs1977,
	author = "H. Furstenberg",
	year = "1977",
	title = "Ergodic behavior of diagonal measures and a theorem of Szemerédi on arithmetic progressions",
	journal = "J. Anal. Math.",
	volume = "31",
	pages = "204--256",
}

@book{Conw1985,
	author = "J.B. Conway",
	year = "1985",
	title = "A Course in Functional Analysis",
	publisher = "Springer",
}

@Book{KerrLi2016,
  author    = {Kerr, David and Li, Hanfeng},
  publisher = {Springer},
  title     = {Ergodic Theory},
  year      = {2016},
}

@article{Schr2013,
	Title = "Uniform families of ergodic operator nets",
	author = "M. Schreiber",
	journal ="Semigroup Forum",
	volume = "86",
	pages ="321--336",
	year ="2013",
}

@book{EiWa2011,
	author = "M. Einsiedler and T. Ward",
	publisher = "Springer",
	year = "2011",
	title = "Ergodic Theory with a view towards Number Theory",
}

@book{KuNi1974,
	author = "L. Kuipers and H. Niederreiter",
	title = "Uniform distribution of sequences",
	year = "1974",
	publisher = "Wiley",
}

@online{Tao2008,
	author = "T. Tao",
	title = "The van der Corput trick, and equidistribution on nilmanifolds",
	year = 2008,
  	url = {https://terrytao.wordpress.com/2008/06/14/the-van-der-corputs-trick-and-equidistribution-on-nilmanifolds/},
 	urldate = {2022-7-27}
}

@InProceedings{Kra2007,
	author = {B. Kra},
	year = {2007},
	title = {Ergodic methods in additive combinatorics},
	booktitle = {Additive Combinatorics},
	volume = {43},
	publisher = {AMS},
	editors = {A. Granville and M. B. Nathanson and J. Solymosi},
	pages = {103--143},
}

@article{ArWi1967,
	author = "L. N. Argabright and C. O. Wilde",
	year = "1967",
	pages = "587--591",
	journal = "Proc. Amer. Math. Soc.",
	volume = "18",
	title = "Semigroups satisfying a strong F{\o}lner condition",
}

@book{Pate1988,
	author = "A. L. T. Paterson",
	year = "1988",
	publisher = "AMS",
	title = "Amenability",
}

@online{ISEM2018,
	title = "Ergodic Theorems",
	author = "T. Eisner and B. Farkas",
	year = "2018",
	urldate = "https://www.fan.uni-
wuppertal.de/fileadmin/mathe/reine_mathematik/funktionalanalysis/farkas/22ISEM-ErgTh.pdf",
	urldate = "2022-7-27",
}

@article{BeMo2016,
	title = "Van der Corput's difference theorem: some modern developments",
	author = "V. Bergelson and J. Moreira",
	year = "2016",
	pages = "437--479",
	volume = "27",
	journal = "Indag. Math. (N.S.)"
}

@online{More2015,
	url = "https://joelmoreira.wordpress.com/2015/04/12/alternative-proofs-of-two-classical-lemmas/",
	author = "J. Moreira",
	year = "2015",
	title = "Alternative proofs of two classical lemmas",
	urldate = "2022-7-27",
}

@book{Stor2013,
	year = "2013",
	title = "Positive Linear Maps of Operators Algebras",
	author = "E. St{\o}rmer",
	publisher = "Springer",
}

@online{Tao2014,
	title = "When is correlation transitive?",
	author = "T. Tao",
	year = "2014",
	urldate = "2022-7-27",
	url = "https://terrytao.wordpress.com/2014/06/05/when-is-correlation-transitive/",
}

@book{Pede2018,
	author = "G. K. Pedersen",
	year = "2018",
	publisher = "Academic Press",
	edition = "2nd edition",
	title = "C*-Algebras and Their Automorphism Groups",
}

@Online{More2018,
  author  = {J. Moreira},
  title   = {A viewpoint on Katai's orthogonality criterion},
  url     = {https://joelmoreira.wordpress.com/2018/07/20/a-viewpoint-on-katais-orthogonality-criterion/},
  urldate = {2022-7-27},
  year    = {2018},
}

@Article{TaoTer2019,
  author  = {Tao, T. and Teräväinen, J.},
  title   = {The structure of logarithmically averaged correlations of multiplicative functions, with applications to the Chowla and Elliott conjectures},
  journal = {Duke Math. J.},
  year    = {2019},
  volume  = {168},
  pages   = {1977--2027},
}

@Article{Fran2017,
  author  = {Frantzikinakis, N.},
  title   = {Ergodicity of the Liouville system implies the Chowla conjecture},
  journal = {Discrete Anal.},
  year    = {2017},
  volume  = {19},
}

@Article{FranHostKra2013,
  author  = {Frantzikinakis, N. and Host, B. and Kra, B.},
  title   = {The polynomial multidimensional Szemerédi theorem along shifted primes},
  journal = {Isr. J. Math.},
  year    = {2013},
  volume  = {194},
  pages   = {331--348},
}

@Article{DundasSkau2020,
  author  = {Bjørn Ian Dundas and Christian F. Skau},
  title   = {Interview with Abel Laureate 2020 Hillel Furstenberg},
  journal = {EMS Newsletter},
  year    = {2020},
  pages   = {45--51},
  month   = {12},
  volume = {118},
}

@Article{BoFo1944,
  author  = {Bohr, H. and F{\o}lner, E.},
  title   = {On some types of functional spaces. A contribution to the theory of almost periodic functions},
  journal = {Acta Math.},
  year    = {1944},
  volume  = {76},
  pages   = {31--155},
}

\end{document}